\theoremstyle{definition}
\newtheorem{theorem}{Theorem} [section]
\newtheorem{conjecture}[theorem]{Conjecture}
\theoremstyle{definition}
\newtheorem{corollary}[theorem]{Corollary}
\newtheorem{lemma}[theorem]{Lemma}
\newtheorem{proposition}[theorem]{Proposition}
\newtheorem{definition}[theorem]{Definition}
\newtheorem*{namedtheorem}{Theorem}
\newtheorem*{namedlemma}{Lemma}
\numberwithin{equation}{section}
\definecolor{vbcolor}{rgb}{0.35, 0.15, 0.6}
\begin{document}
\title{Submodules of $H^2(\mathbb{T}^2)$ and frames by pairs of bounded commuting operators}

\author{Victor Bailey and Carlos Cabrelli}

\address{Department of Mathematics,
University of Oklahoma,
Norman, OK 73019 USA}
\email{victor.bailey@ou.edu}

\address{Departmentamento de Matemática, Universidad de Buenos Aires, Instituto de Matemática ``Luis Santaló'' (IMAS-CONICET-UBA), Buenos Aires, Argentina}
\email{cabrelli@dm.uba.ar}

\begin{abstract}
Recent work in Dynamical Sampling has been centered on characterizing frames  obtained by the orbit of a vector under a bounded operator. We prove a necessary and sufficient condition for a pair of bounded commuting operators on a separable infinite-dimensional Hilbert space to generate a frame by unilateral iterations on a single vector. Applying the theory on submodules of the Hardy module $H^2(\mathbb{T}^2)$, we characterize these frames in terms of their relation to the two-variable Jordan block on a certain quotient module and provide some properties of frames of this form. 
\end{abstract}
\maketitle
\section
{Introduction}

\quad A central problem in Dynamical Sampling  \cite{ACMT17} asks for the conditions needed to obtain frames and/or bases for a separable Hilbert space $H$ from collections of vectors of the form $\{A^n g\}_{{0 \leq n \leq \Psi (g)} , \, g \, \in \, G}$ 
where $A \in B(H)$, $G$ is a countable subset of $H,$ and $\Psi$ is a map from $G$ to $\mathbb{N}_0 \cup \{\infty\}$ (where $\mathbb{N}_0 = \mathbb{N} \cup \{0\}$).  As a complete solution to this Dynamical Sampling problem in finite-dimensional Hilbert spaces was provided in \cite{ACMT17}, recent works consider the problem where $H$ is a separable infinite-dimensional Hilbert space. In this setting, typically the problem is posed in the following way: Given an operator $T \in B(H)$ and a vector $\varphi \in H$, what are conditions for the system $\{T^n \varphi\}_{n \geq 0}$ to be a frame or have properties that are useful for reconstructing $H$ such as completeness, exactness, etc.

 \quad A necessary and sufficient condition for a system $\{T^n \varphi\}_{n \geq 0} \subset H$ to be a frame was given in \cite{CHP20}. This result exhibited the inextricable connection between Dynamical Sampling and Hardy Space theory as the characterization of the bounded operators that can be used to generate a frame for $H$ hinges on the relationship between the iterating operator and a compression of the shift operator to a certain type of subspace (known as a model space) of the Hardy Space, \begin{displaymath}
H^2(\mathbb{T}) = \{f \in L^2(\mathbb{T}) : \int_{\mathbb{T}} f(z) \overline{z}^n dz = 0 \, \,  \forall \, n<0 \}.
 \end{displaymath}
 In Theorem 3.6 of \cite{CHP20} it is shown that for any complex separable infinite-dimensional Hilbert Space $H$, the collection of vectors $\{T^{n}\varphi\}_{n \geq 0} \subset H$ is a frame for $H$ if and only if for some inner function $\theta \in H^2(\mathbb{T})$ there is an infinite-dimensional model space $K_\theta =  H^2(\mathbb{T}) \ominus \theta H^2(\mathbb{T})$ for which the operator $T$ is similar to the compressed shift operator $S_{\theta} = P_{K_{\theta}} \left.S\right|_{K_{\theta}}$ (where $P_{K_{\theta}}$ the orthogonal projection onto $K_{\theta}$) and $\varphi$ is the image of the function $P_{K_{\theta}}1_{\mathbb{T}}$ under the invertible map given in the similarity relation between $T$ and $S_{\theta}$. 


Recall that a system $\{f_n\}_{n \in I} \subset H$ is said to be a frame if there exist fixed constants $ 0 < C_{1} \leq C_{2}$ such that for each $ f \in H$,  
\begin{displaymath} C_{1} \|f\|^2 \leq \sum_{n \in I} | \langle f , f_n \rangle |^2 \leq C_{2} \|f\|^2 . \quad 
\end{displaymath}
A frame for $H$ admits basis-like expansions for each element of $H$. That is, for a system $\{f_n\}_{n \in I} \subset H $, that satisfies the inequality above, each vector $f\in H$ can be given a series representation of the form  $f = \underset{n \in I}{\sum} c_n \, f_n $
where $\{c_n\}_{n \in I} \in \ell^2(I)$ and convergence is in the norm of $H$.  For a frame, these expansions are not necessarily unique so that a frame can be viewed as a generalized basis for $H$. Frames for which these expansions are not unique, so that the frame is not minimal, are known as overcomplete frames and a minimal frame is known as a Riesz basis. 


 In the sequel, we show that for frames of the form $\{T_{1}^iT_{2}^j\varphi\}_{i,j \geq 0}$ (where $T_{1}, T_{2} \in B(H)$ commute) an analogous relation holds as the one given in Theorem 3.6 of \cite{CHP20}. That is, the bounded and commuting iterating operators $T_{1}$ and $T_{2}$ must be similar to compressions of the two shift operators defined on a certain subspace of the Hardy Space on the bidimensional torus, $H^2(\mathbb{T}^2)$. 
In \cite{ACCP22}, the authors investigated a mixed setting in which one of the operators is applied bilaterally, while the other is applied only unilaterally. This asymmetric structure requires the development of specific techniques tailored to that framework. These methods are substantially different from those employed in our symmetric setting, which introduces its own significant analytical challenges, most notably, the need to work within the Hardy space on the bidisc.
In the space 
\begin{displaymath}
H^2(\mathbb{T}^2) = \{f(z, w) \in L^2(\mathbb{T}^2): \int_{\mathbb{T}^2} f(z,w) \overline{z}^m \overline{w}^n d \mu = 0 \, \, \text{if} \, \, \,  m<0 \, \, \text{or} \, \, n<0 \} 
\end{displaymath}
where $\mu$ is the Haar measure on $\mathbb{T}^2$, many of the characteristics of the space $H^2(\mathbb{T})$ are reflected. On $H^2(\mathbb{T}^2)$ we have two shift operators defined by multiplication by $z$ and multiplication by $w$ respectively and say a subspace $M \subseteq H^2(\mathbb{T}^2)$  is shift-invariant if it is invariant under both shift operators. However, the subspaces corresponding to the model space $K_\theta$ in this setting are not necessarily of the form $H^2(\mathbb{T}^2) \ominus \phi H^2(\mathbb{T}^2)$ as Beurling's characterization of shift-invariant subspaces of $H^2(\mathbb{T})$ does not carry over fully to this setting. For this reason, in order to seamlessly extend the result from Theorem 3.6 of \cite{CHP20} to the operator pair case, we make use of the theory on the structure of shift-invariant subspaces of multidimensional Hardy Spaces which enables us to obtain Corollary \ref{B-type} which follows from the  complete characterization of frames of the form $\{T_{1}^iT_{2}^j\varphi\}_{i,j \geq 0}$ where $T_{1}, T_{2} \in B(H)$ commute that we provide in Theorem \ref{char} below.
\section{Shift-Invariant Subspaces of $H^2(\mathbb{T}^2)$}
Note that the Hardy space has an equivalent definition as a collection of functions defined on the open unit disc $\mathbb{D}$. That is, the space \begin{displaymath}
    H^2(\mathbb{D}) = \{ f: \mathbb{D} \to \mathbb{C} \, \, \text{such that} \, \, f(z) = \underset{n \geq 0}{\sum} c_{n}z^n \, \, \text{where} \, \, \, (c_{n}) \in \ell^2(\mathbb{N}_0)\},
\end{displaymath} 
of analytic functions on $\mathbb{D}$ is known as the Hardy space on the disc and we identify this space of functions with those in $H^2(\mathbb{T})$ via radial limits \cite{MR07}. 
We say a subspace $M \subseteq H^2(\mathbb{T})$  is shift-invariant if it is invariant under the multiplication operator $S \in B(H^2(\mathbb{T}))$ where $Sf(z) = zf(z)$ for all $f \in H^2(\mathbb{T})$.

Similarly, we can identify functions in 
\begin{displaymath}
    H^2(\mathbb{D}^2) = \{ f: \mathbb{D}^2 \to \mathbb{C} \, \, \text{such that} \, \, f(z, w) = \underset{n \geq 0}{\sum} c_{n,m}z^nw^m \, \, \text{where} \, \, \, (c_{n, m}) \in \ell^2(\mathbb{N}_0\times \mathbb{N}_0)\},
\end{displaymath} 
the Hardy Space on the bidisc with those in $H^2(\mathbb{T}^2)$ via radial limits in like fashion. 

 We say a subspace $M \subseteq H^2(\mathbb{T}^2)$  is shift-invariant if it is invariant under both shifts and such subspaces are also referred to in the literature as submodules of the Hardy module $H^2(\mathbb{T}^2)$ \cite{CG03,S15,Y18}.

\begin{definition}
A subspace $M \subseteq H^2(\mathbb{T}^2)$  is shift-invariant, or a submodule of $H^2(\mathbb{T}^2)$, if it is invariant under both shift operators $S_z$ and $S_w$. That is, $M$ is shift-invariant if $S_zM = zM \subset M$ and $S_wM = wM \subset M$.
\end{definition}
 \begin{definition}
     A quotient module is a subspace in $H^2(\mathbb{T}^2)$ of the form $K = H^2(\mathbb{T}^2) \ominus M$ for some submodule $M \subset H^2(\mathbb{T}^2)$.
 \end{definition}
For a given submodule $M$ we associate the pair $(S_{K_z}, S_{K_w})$ to the quotient module $K= H^2(\mathbb{T}^2) \ominus M$ which are each compressions of the shift operators $S_z$ and $S_w$ to the quotient module. In the literature, this pair is known as a two-variable Jordan block \cite{LYY11}.
 \begin{definition}
For a given quotient module $K = H^2(\mathbb{T}^2) \ominus M$ the pair $(S_{K_z}, S_{K_w})$ where $S_{K_z} = P_{K} \left.S_{z}\right|_{K} $ and $S_{K_w} = P_{K} \left.S_{w}\right|_{K}$ (where $P_{K}$ is the orthogonal projection onto $K$) is the two-variable Jordan block on the quotient module $K$.
\end{definition}
Over the past several decades \cite{ACD86,CG03,R69,Y18} the structure of shift-invariant subspaces in $H^2(\mathbb{T}^2)$ ( or in $H^2(\mathbb{D}^2)$) has been the subject of extensive study in the area of multivariable operator theory. This has led to the formulation of the viewpoint of such subspaces as Hilbert modules \cite{CG03,DP89,S15} as subspaces closed under multiplication by $z \, \, \text{and} \, \, w$ are in fact closed under multiplication by all functions in the bidisc algebra $A(\mathbb{D}^2)$ (which is the closure of the polynomials in $C(\overline{\mathbb{D}}^2)$), and to the construction of various tools which can be used to ascertain the properties (such as the rank, codimension, etc.) of a given submodule \cite{Y18}. 
\newline
In the single variable case, that is in the space $H^2(\mathbb{T})$, there is a complete characterization of the structure of shift-invariant subspaces due to Beurling in terms of the inner functions which parametrize the spaces.  
  \begin{definition}
      An inner function in $H^2(\mathbb{D})$ is a function $\theta$ that satisfies $| \theta (z)| = 1$ almost everywhere on $\mathbb{T}$.
\end{definition}
Inner functions in $H^2(\mathbb{D})$ (or $H^2(\mathbb{T})$) are the class of unimodular functions in $H^2(\mathbb{D})$.
\begin{definition}
    A shift-invariant subspace $M$ is said to be unitarily equivalent to $N$ if there exists a unimodular function $\psi \in L^\infty$ such that $M =\psi N $.
\end{definition}
 \quad \, The following theorem characterizing the structure of the shift-invariant subspaces in $H^2(\mathbb{T})$ by Beurling \cite{B48} shows that they  are all unitarily equivalent to the full space $H^2(\mathbb{T})$.
 
   \begin{theorem}[Beurling]
    Every nontrivial invariant subspace of the shift operator $S \in B(H^2(\mathbb{T})) $, where $Sf(z) = zf(z)$, is of the form $\theta H^2(\mathbb{T})$ for some inner function $\theta$. Conversely, any  subspace of the form $\theta H^2(\mathbb{T})$ for some inner function $\theta$ is shift-invariant.
    \end{theorem}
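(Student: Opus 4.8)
The plan is to dispatch the converse direction first, since it is routine, and then establish the main (forward) direction by a wandering-subspace argument in the spirit of the Wold decomposition for isometries. For the converse, if $\theta$ is inner then $|\theta| = 1$ a.e., so multiplication by $\theta$ is an isometry on $L^2(\mathbb{T})$ and $\theta H^2(\mathbb{T})$ is therefore a closed subspace; it lies in $H^2(\mathbb{T})$ because inner functions are bounded, and $z \cdot \theta H^2(\mathbb{T}) = \theta \cdot z H^2(\mathbb{T}) \subseteq \theta H^2(\mathbb{T})$ exhibits shift-invariance. For the forward direction, let $M$ be a nontrivial closed $S$-invariant subspace. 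First I would note that $SM$ is a \emph{proper} closed subspace of $M$: since $\bigcap_{n \geq 0} S^n H^2(\mathbb{T}) = \bigcap_{n \geq 0} z^n H^2(\mathbb{T}) = \{0\}$, the equality $SM = M$ would force $M = S^n M \subseteq z^n H^2(\mathbb{T})$ for every $n$, hence $M = \{0\}$, a contradiction. Thus the wandering subspace $E := M \ominus SM$ is nonzero.

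Next I would select a unit vector $\theta \in E$ and show that $\theta$ is inner. The key observation is that $\theta \perp SM$, and in particular $\theta \perp z^n \theta$ for every $n \geq 1$, because $z^n \theta \in S^n M \subseteq SM$. Writing this out in Fourier coefficients gives $\int_{\mathbb{T}} |\theta|^2 \, \overline{z}^n \, dz = 0$ for all $n \geq 1$, and conjugation extends this to all $n \neq 0$; hence $|\theta|^2$ is constant a.e., and the normalization $\|\theta\| = 1$ forces $|\theta| = 1$ a.e., so $\theta$ is inner.

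The conceptual heart of the argument — and the step I expect to be the main obstacle — is showing that $\dim E = 1$. Suppose $\theta, \psi \in E$ are orthonormal; both are inner by the previous paragraph. Orthogonality of $E$ to $SM$ gives $\langle \theta, z^n \psi \rangle = 0$ and $\langle \psi, z^n \theta \rangle = 0$ for all $n \geq 1$, while $\theta \perp \psi$ handles the index $n = 0$. Interpreting these as statements about the Fourier coefficients of $\theta \overline{\psi}$, the first family kills all coefficients of index $\geq 1$, the second (via $\psi\overline{\theta} = \overline{\theta\overline{\psi}}$) kills all coefficients of index $\leq -1$, and the third kills the constant term. Hence $\theta \overline{\psi} = 0$ a.e.; but $|\theta \overline{\psi}| = |\theta|\,|\psi| = 1$ a.e., a contradiction. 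Therefore $E = \mathbb{C}\theta$ is one-dimensional.

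Finally I would assemble the pieces. Since $S|_M$ is an isometry on $M$ with $\bigcap_{n \geq 0} S^n M \subseteq \bigcap_{n \geq 0} z^n H^2(\mathbb{T}) = \{0\}$, the subspaces $\{S^n E\}_{n \geq 0}$ are mutually orthogonal and their closed span exhausts $M$, yielding $M = \bigoplus_{n \geq 0} S^n E = \overline{\operatorname{span}}\{ z^n \theta : n \geq 0 \}$. Because multiplication by the unimodular $\theta$ is isometric and the polynomials are dense in $H^2(\mathbb{T})$, this closed span equals $\theta H^2(\mathbb{T})$, which completes the proof.
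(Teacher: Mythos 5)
The paper does not actually prove this statement: it is quoted as Beurling's classical theorem with only a citation to \cite{B48}, so there is no in-paper argument to compare yours against. On its own merits, your proof is correct and is the standard wandering-subspace argument (essentially Helson's proof via the Wold decomposition). The converse direction is fine. In the forward direction, the deduction that $SM \subsetneq M$ from $\bigcap_{n\ge 0} z^n H^2(\mathbb{T}) = \{0\}$ is right; the Fourier-coefficient computation showing that any unit vector of $E = M \ominus SM$ is inner is carried out correctly (the coefficients of $|\theta|^2$ vanish for all nonzero indices, so $|\theta|^2$ equals its mean $\|\theta\|^2 = 1$); the argument that two orthonormal vectors $\theta,\psi \in E$ would force $\theta\overline{\psi} = 0$ a.e.\ against $|\theta\overline{\psi}| = 1$ a.e.\ is the standard and correct way to get $\dim E = 1$; and the final identification $M = \overline{\operatorname{span}}\{z^n\theta : n \ge 0\} = \theta H^2(\mathbb{T})$ legitimately uses that multiplication by a unimodular function is an $L^2(\mathbb{T})$ isometry, hence maps the closed span of the polynomials onto $\theta H^2(\mathbb{T})$. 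If you write this up formally, the only points worth making explicit are that $SM$ is closed (because $S$ is an isometry), so that $M \ominus SM$ is well defined, and that the mutual orthogonality of the subspaces $S^nE$ follows from $E \perp SM$ combined with $\langle S^n e, S^m f\rangle = \langle e, S^{m-n} f\rangle$ for $m>n$; you use both implicitly and correctly.
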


There are several known examples of shift-invariant subspaces in $H^2(\mathbb{T}^2)$ that are not of the form given in Beurling's Theorem. That is, there exist submodules in $H^2(\mathbb{T}^2)$ that are not unitarily equivalent to $H^2(\mathbb{T}^2)$. For this reason numerous works have been aimed at providing characterizations for the structure of submodules \cite{Y18}.

\begin{definition}
  An inner function in $H^2(\mathbb{T}^2)$ is a function $\phi$ that satisfies $| \phi (z, w)| = 1$ almost everywhere on $\mathbb{T}^2$.
\end{definition}

As noted above, Beurling's characterization of shift-invariant subspaces does not translate fully to the multidimensional Hardy Space setting as there exist shift-invariant subspaces in $H^2(\mathbb{T}^2)$ that are not of Beurling-type \cite{R69}, that is of the form $\phi H^2(\mathbb{T}^2)$ for some inner function $\phi \in H^2(\mathbb{T}^2)$.  However, due to Mandrekar \cite{M88}, we have a characterization of the Beurling-type shift-invariant subspaces in $H^2(\mathbb{T}^2)$.

\begin{definition}
A pair of isometries $V_1$ and $V_2$ doubly commute on a Hilbert space, $H$, if $V_1$ commutes with $V_2$ and $V_1$ commutes with $V_2^{*}$.
\end{definition}

\begin{theorem}[Mandrekar]
A nontrivial shift-invariant subspace $M \subset H^2(\mathbb{T}^2)$ is of the form $\phi H^2(\mathbb{T}^2)$ with $\phi(z,w)$ inner if and only if $S_z$ and $S_w$ doubly commute on $M$.
 \end{theorem}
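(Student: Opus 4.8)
The plan is to prove both implications, disposing of the easy forward direction first and then building the converse on a two-variable Wold decomposition. Throughout write $V_1 := S_z|_M$ and $V_2 := S_w|_M$, which are isometries of $M$ since $M$ is a submodule.

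For the forward implication, suppose $M = \phi H^2(\mathbb{T}^2)$ with $\phi$ inner. Since $|\phi| = 1$ a.e. on $\mathbb{T}^2$, multiplication by $\phi$ is an isometry of $L^2(\mathbb{T}^2)$ carrying $H^2(\mathbb{T}^2)$ onto $M$; call this unitary $U\colon H^2(\mathbb{T}^2)\to M$. A one-line check ($V_1 Uf = z\phi f = \phi z f = U S_z f$, and similarly for $w$) shows that $U$ intertwines the shifts $S_z, S_w$ on $H^2(\mathbb{T}^2)$ with $V_1, V_2$ on $M$. On the full space the two shifts doubly commute — most transparently, under $H^2(\mathbb{T}^2)\cong H^2(\mathbb{T})\otimes H^2(\mathbb{T})$ one has $S_z = S\otimes I$ and $S_w = I\otimes S$, so $S_z S_w^* = S\otimes S^* = S_w^* S_z$. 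Unitary equivalence via $U$ then transfers double commutativity to $V_1, V_2$ on $M$, which is the claim.

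For the converse I would first record that $V_1$ and $V_2$ are \emph{pure} isometries, since the restriction of a pure isometry to an invariant subspace is pure and $S_z, S_w$ are shifts (hence pure) on $H^2(\mathbb{T}^2)$. The only place where the full strength of the hypothesis enters is the following reducing-subspace claim, which I expect to be the conceptual pivot of the argument. Set $N := M\ominus zM = \ker V_1^*$ (adjoint taken in $M$). From $V_1 V_2 = V_2 V_1$ and $V_1 V_2^* = V_2^* V_1$ one obtains, after taking adjoints, $V_1^* V_2 = V_2 V_1^*$ and $V_1^* V_2^* = V_2^* V_1^*$; hence if $V_1^* f = 0$ then $V_1^* V_2 f = V_1^* V_2^* f = 0$, so $V_2 N\subseteq N$ and $V_2^* N\subseteq N$, i.e. $N$ reduces $V_2$. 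As $V_2|_N = S_w|_N$ is again a pure isometry, applying the Wold decomposition first to $V_1$ on $M$ and then to $V_2|_N$ on $N$ yields the orthogonal decomposition $M = \bigoplus_{i,j\geq 0} z^i w^j L$, where $L := N\ominus wN$. A short check (using $z^i w^j L\subseteq zM$ for $i\geq 1$ together with $L\perp wN$) confirms $L\perp zM$ and $L\perp wM$, so the summands are genuinely mutually orthogonal.

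It remains to show $\dim L = 1$ and to read off the Beurling form, and this is the step I expect to be the true obstacle, since it is where the ambient space $H^2(\mathbb{T}^2)$ — specifically the unimodularity of boundary values — must be used rather than abstract Hilbert-space structure. Mutual orthogonality of the summands $z^iw^jL$ together with $|z| = |w| = 1$ on $\mathbb{T}^2$ gives, for $\ell_1,\ell_2\in L$, the identity $\langle z^iw^j\ell_1,\, z^{i'}w^{j'}\ell_2\rangle = \delta_{(i,j),(i',j')}\langle\ell_1,\ell_2\rangle$; as the differences $(i'-i,\,j'-j)$ range over all of $\mathbb{Z}^2$, this says that every Fourier coefficient of $\ell_1\overline{\ell_2}$ vanishes apart from the one pinned by $\langle\ell_1,\ell_2\rangle$. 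Taking $\ell_1 = \ell_2 = \ell$ a unit vector forces $|\ell|^2\equiv 1$, so every unit vector of $L$ is unimodular; taking $\ell_1\perp\ell_2$ forces $\ell_1\overline{\ell_2} = 0$ a.e., which is impossible for two unimodular functions. Hence $\dim L\leq 1$, and nontriviality of $M$ gives $L = \mathbb{C}\phi$ with $\phi$ inner. Then $M = \bigoplus_{i,j} z^iw^j\mathbb{C}\phi = \overline{\mathrm{span}}\{z^iw^j\phi : i,j\geq 0\} = \phi H^2(\mathbb{T}^2)$, the last equality holding because multiplication by the inner function $\phi$ is isometric and so has closed range equal to the closed span of the $\phi z^i w^j$.
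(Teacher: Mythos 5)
This theorem is quoted in the paper from Mandrekar's article \cite{M88} and no proof is given there, so there is nothing internal to compare against; judged on its own terms, your argument is correct and complete. The forward direction via the unitary $f\mapsto\phi f$ intertwining $(S_z,S_w)$ with $(S_z|_M,S_w|_M)$ is fine, and the converse is the classical route: the doubly commuting hypothesis makes $N=M\ominus zM$ reduce $S_w|_M$, the two successive Wold decompositions (both restrictions being pure because $\bigcap_n z^nH^2(\mathbb{T}^2)=\bigcap_n w^nH^2(\mathbb{T}^2)=\{0\}$) give $M=\bigoplus_{i,j\ge 0}z^iw^jL$ with $L=(M\ominus zM)\ominus w(M\ominus zM)$, and the Fourier-coefficient computation showing $\ell_1\overline{\ell_2}$ is a.e.\ constant correctly forces both the unimodularity of unit vectors in $L$ and $\dim L\le 1$. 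This is essentially the S{\l}oci\'nski--Mandrekar argument, i.e.\ the proof one finds in \cite{M88}. Two points are worth making explicit when you write it up: that $L\neq\{0\}$ because $L=\{0\}$ would collapse the whole decomposition and hence $M$, contradicting nontriviality; and that $\phi H^2(\mathbb{T}^2)$ is closed (being the image of a closed subspace under the isometry $M_\phi$ of $L^2(\mathbb{T}^2)$), which is what lets you identify it with $\overline{\operatorname{span}}\{z^iw^j\phi\}$.
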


It is important to note that a nontrivial Beurling-type shift-invariant subspace, $\phi H^2(\mathbb{T}^2)$,  always has the property that the associated quotient module is infinite dimensional (that is, $dim(H^2(\mathbb{T}^2) \ominus \phi H^2(\mathbb{T}^2)) = \infty$). In Theorem 3.6 in \cite{CHP20} it was required that the subspace $K_\theta = H^2(\mathbb{T}) \ominus \theta H^2(\mathbb{T})$ had infinite dimension (which equivalently says $\theta H^2(\mathbb{T})$ has infinite codimension), since the theorem necessitates that the inner function $\theta$ not be a finite Blaschke product (which by Theorem 3.14 \cite{RR03} means $\theta H^2(\mathbb{T})$ has infinite codimension). 
Thus, the value of a Beurling-type submodule in $H^2(\mathbb{T}^2)$, for our purposes, is exhibited by the fact that we do not need to impose any conditions on the inner function in this setting to obtain $\phi H^2(\mathbb{T}^2)$ has infinite codimension. That is, we automatically satisfy the condition that the quotient module corresponding to $K_\theta$ in our setting has infinite dimension with any proper Beurling-type shift-invariant subspace. 

\begin{proposition}\label{infcodim}
Every proper submodule of the form $\phi H^2(\mathbb{T}^2)$, where $\phi(z, w)$ is a (nonconstant) inner function, satisfies the property that $dim(H^2(\mathbb{T}^2) \ominus \phi H^2(\mathbb{T}^2)) = \infty$. That is, every proper Beurling-type shift-invariant subspace of $H^2(\mathbb{T}^2)$ has infinite codimension.
\end{proposition}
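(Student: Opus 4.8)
The plan is to argue by contradiction: suppose the proper submodule $M = \phi H^2(\mathbb{T}^2)$ has finite codimension, i.e.\ $\dim K < \infty$ where $K = H^2(\mathbb{T}^2)\ominus \phi H^2(\mathbb{T}^2)$, and derive that $\phi$ must be constant. First I would record that $M$ is nontrivial and proper: since $\phi$ is a nonconstant inner function, $1 \notin \phi H^2(\mathbb{T}^2)$ (otherwise $\overline{\phi} = 1/\phi \in H^2(\mathbb{T}^2)$, forcing $\phi$ to be simultaneously analytic and antianalytic, hence constant), so $M \neq H^2(\mathbb{T}^2)$. The main route is then the full-range dichotomy from the structure theory of submodules of $H^2(\mathbb{D}^2)$: by Agrawal--Clark--Douglas \cite{ACD86}, a submodule of finite codimension necessarily has full range, whereas a Beurling-type submodule $\phi H^2(\mathbb{T}^2)$ with $\phi$ inner fails to have full range unless $\phi$ is constant (the non-full-range obstruction underlying Mandrekar's theorem \cite{M88}). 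These two facts are contradictory, so no nonconstant inner $\phi$ can give a finite-codimensional $M$.

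Before invoking that machinery, I would isolate the part of the statement that can be proved by hand, namely the case in which $\phi$ has a zero inside the bidisc. Working in the reproducing kernel picture of $H^2(\mathbb{D}^2)$ with Szeg\H{o} kernel $k_{(\lambda,\mu)}(z,w) = \left((1-\overline{\lambda}z)(1-\overline{\mu}w)\right)^{-1}$, the key observation is that $k_{(\lambda,\mu)} \in K$ if and only if $\phi(\lambda,\mu)=0$: indeed $\langle k_{(\lambda,\mu)}, \phi g\rangle = \overline{\phi(\lambda,\mu)}\,\overline{g(\lambda,\mu)}$ vanishes for all $g \in H^2(\mathbb{T}^2)$ exactly when $\phi(\lambda,\mu)=0$. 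Now if $\phi(\lambda_0,\mu_0)=0$ for some $(\lambda_0,\mu_0)\in\mathbb{D}^2$, then the zero set $Z(\phi)\cap\mathbb{D}^2$ is an analytic variety of codimension one and hence infinite; since Szeg\H{o} kernels at distinct points of $\mathbb{D}^2$ are linearly independent, $K$ contains infinitely many linearly independent vectors and $\dim K = \infty$. This disposes of every inner $\phi$ carrying a nontrivial Blaschke-type factor.

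The hard part will be the complementary, zero-free case, where $\phi$ is a nonconstant singular inner function on $\mathbb{D}^2$ and the kernel argument produces nothing. Here I expect the genuine obstacle to lie, and I would handle it in one of two ways. The clean way is simply to defer to the full-range dichotomy above, which makes no distinction between the two cases. A more hands-on alternative would be to exploit the extra variable directly: for product-type symbols $\phi(z,w)=\psi(z)$ one has $K = K_\psi \otimes H^2(\mathbb{T}_w)$, which is infinite-dimensional by virtue of the $H^2(\mathbb{T}_w)$ factor alone, and the aim would be to show that a general nonconstant $\phi$ always carries an analogous infinite ``transverse'' direction in its quotient module. Making that reduction rigorous for symbols that genuinely mix $z$ and $w$ is the delicate point, and it is precisely what the Agrawal--Clark--Douglas full-range theorem packages for us; accordingly I would present the short proof via \cite{ACD86} and \cite{M88} as the main argument and keep the kernel computation as the illustrative special case.
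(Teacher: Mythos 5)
Your main argument is exactly the paper's proof: finite codimension forces full range by Agrawal--Clark--Douglas \cite{ACD86}, while $\phi H^2(\mathbb{T}^2)$ with $\phi$ nonconstant inner is never full range by \cite{M88}, a contradiction. The reproducing-kernel discussion of the zero set of $\phi$ is a nice supplementary illustration but is not needed, and your final answer correctly defers to the full-range dichotomy, so the proposal is correct and essentially identical in approach to the paper's.
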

\begin{proof}
By \cite{ACD86}, a shift-invariant subspace $M \subseteq H^2(\mathbb{T}^2)$ with finite codimension has full range (definition on pg 5 of \cite{ACD86}). By \cite{M88}, shift-invariant subspaces of the form $\phi H^2(\mathbb{T}^2)$, where $\phi(z, w)\in H^2(\mathbb{T}^2)$ is an inner function, do not have full range unless $\phi$ is constant. Therefore, a Beurling-type shift-invariant subspace, $\phi H^2(\mathbb{T}^2)$, cannot have finite codimension unless $\phi$ is constant. That is, shift-invariant subspaces of the form $\phi H^2(\mathbb{T}^2)$  have infinite codimension unless $\phi H^2(\mathbb{T}^2) = H^2(\mathbb{T}^2)$ (since $\phi H^2(\mathbb{T}^2) = H^2(\mathbb{T}^2)$ if and only if $\phi$ is constant).
\end{proof}
A characterization of the submodules $M$ of finite codimension was provided by Ahern and Clark in terms of the topological properties of the zero set of the polynomials in $M$ \cite{Y18}.

\section{Frames by Unilateral Iterations of bounded Commuting Operators}
In what follows, we characterize the pairs of bounded commuting operators which can be used to generate frames for a separable infinite-dimensional Hilbert space. Critical for the characterization provided in Theorem \ref{char} is the following notion of similarity.
 \begin{definition} \label{similar}
Let $H, K$ be complex separable infinite-dimensional Hilbert Spaces. Let $T_1$ and $ T_2$ be commuting operators  in $B(H)$ and let  $V_1$ and $ V_2$ be commuting operators in $B(K)$. We say the triples $(T_1, T_2, \varphi)$ and $(V_1, V_2, f)$ are similar and write 
\newline
$(T_1, T_2, \varphi) \cong (V_1, V_2, f)$ if there exists an invertible map $L \in B(H, K)$ such that $LT_{1}L^{-1} = V_{1}$, $LT_{2}L^{-1} = V_{2}$, and $L\varphi = f$ where $\varphi \in H$ and $f \in K$.
\end{definition}
Note that the similarity relation in the above definition implies that $LT_{1}^iT_{2}^jL^{-1} = V_{1}^iV_{2}^j$ for all $i, j \geq 0$.



\begin{lemma} \label{similarity}
Suppose $(T_1, T_2, \varphi) \cong (V_1, V_2, f)$. Then $\{T_{1}^iT_{2}^j\varphi\}_{i,j \geq 0}$ is a frame if and only if $\{V_{1}^iV_{2}^jf\}_{i,j \geq 0}$ is a frame. In the affirmative case,  the operator $L$ in Definition \ref{similar} is unique. 
\end{lemma}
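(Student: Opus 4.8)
The plan is to exploit the fact that similarity via an invertible $L \in B(H,K)$ is a bounded, boundedly-invertible intertwining, so it transports frames to frames while distorting frame bounds only by the operator norms $\|L\|$ and $\|L^{-1}\|$. First I would establish the forward implication: assuming $\{T_1^i T_2^j \varphi\}_{i,j\geq 0}$ is a frame for $H$ with bounds $0 < C_1 \leq C_2$, I want to show $\{V_1^i V_2^j f\}_{i,j\geq 0}$ is a frame for $K$. The key computation uses that $V_1^i V_2^j f = L T_1^i T_2^j L^{-1} L \varphi = L(T_1^i T_2^j \varphi)$, which follows from the remark after Definition \ref{similar} together with $L\varphi = f$. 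For an arbitrary $g \in K$, I compute $\langle g, V_1^i V_2^j f\rangle = \langle g, L(T_1^i T_2^j \varphi)\rangle = \langle L^{*} g, T_1^i T_2^j \varphi\rangle$, and then apply the frame inequality for the $T$-system to the vector $L^{*} g \in H$. This yields
\begin{displaymath}
C_1 \|L^{*} g\|^2 \leq \sum_{i,j \geq 0} |\langle g, V_1^i V_2^j f\rangle|^2 \leq C_2 \|L^{*} g\|^2 .
\end{displaymath}

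To convert these into genuine frame bounds in terms of $\|g\|$, I would use that $L^{*}$ is also invertible (since $L$ is), giving the two-sided estimate $\|(L^{*})^{-1}\|^{-1} \|g\| \leq \|L^{*} g\| \leq \|L^{*}\| \|g\|$ for all $g \in K$. Substituting these bounds produces
\begin{displaymath}
\frac{C_1}{\|(L^{*})^{-1}\|^{2}} \|g\|^2 \leq \sum_{i,j \geq 0} |\langle g, V_1^i V_2^j f\rangle|^2 \leq C_2 \|L^{*}\|^{2} \|g\|^2 ,
\end{displaymath}
so the $V$-system is a frame with the displayed bounds. The reverse implication is symmetric: since $L^{-1}$ realizes the similarity $(V_1, V_2, f) \cong (T_1, T_2, \varphi)$ (one checks $L^{-1} V_k L = T_k$ and $L^{-1} f = \varphi$), the same argument with the roles of the two triples interchanged gives that a frame for the $V$-system forces a frame for the $T$-system.

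For the uniqueness of $L$ in the affirmative case, I would argue that when the systems are frames (hence complete), the intertwining conditions pin down $L$ completely. Concretely, suppose $L$ and $L'$ both satisfy the conditions of Definition \ref{similar}. Then on any vector of the form $T_1^i T_2^j \varphi$ we have $L(T_1^i T_2^j \varphi) = V_1^i V_2^j (L\varphi) = V_1^i V_2^j f$, and identically $L'(T_1^i T_2^j \varphi) = V_1^i V_2^j f$, so $L$ and $L'$ agree on the set $\{T_1^i T_2^j \varphi\}_{i,j\geq 0}$. Since this set is a frame for $H$, its closed linear span is all of $H$; as $L$ and $L'$ are bounded and agree on a total set, they agree everywhere, giving $L = L'$. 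The main obstacle I anticipate is purely bookkeeping rather than conceptual: one must be careful that the frame property requires a genuine \emph{two-sided} bound, so invertibility of $L^{*}$ (equivalently of $L$) is essential for the lower bound to survive the substitution; a merely bounded $L$ would preserve only the upper (Bessel) bound. Everything else reduces to the algebraic identity $V_1^i V_2^j f = L(T_1^i T_2^j \varphi)$, which is exactly the content of the remark following Definition \ref{similar}.
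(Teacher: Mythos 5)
Your proposal is correct and follows essentially the same route as the paper: both arguments transfer the frame inequality through the identity $\langle V_1^iV_2^jf, g\rangle = \langle T_1^iT_2^j\varphi, L^{*}g\rangle$ together with the invertibility of $L$, and both derive uniqueness of $L$ from the completeness of the frame (the paper via the frame expansion $f=\sum c_{i,j}T_1^iT_2^j\varphi$, you via density of the span plus boundedness, which is the same idea). Your version simply makes the frame-bound bookkeeping explicit where the paper invokes the standard fact that topological isomorphisms preserve frames.
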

\begin{proof}
Since $\underset{i,j \geq 0}{\sum} |\langle V_{1}^iV_{2}^jf, g \rangle |^2 = \underset{i,j \geq 0}{\sum} |\langle LT_{1}^iT_{2}^jL^{-1} L\varphi , g\rangle |^2 = \underset{i,j \geq 0}{\sum} |\langle T_{1}^iT_{2}^j\varphi, L^*g \rangle |^2$, the statement follows since $L$ is a topological isomorphism so that $\{T_{1}^iT_{2}^j\varphi\}_{i,j \geq 0}$ being a frame implies $\{LT_{1}^iT_{2}^j\varphi\}_{i,j \geq 0}$ is a frame and thus $\{V_{1}^iV_{2}^jf\}_{i,j \geq 0}$ is a frame by the equation above. Also, assuming $\{V_{1}^iV_{2}^jf\}_{i,j \geq 0}$ is a frame implies $\{LT_{1}^iT_{2}^j\varphi\}_{i,j \geq 0}$ is a frame so that $\{T_{1}^iT_{2}^j\varphi\}_{i,j \geq 0} = \{L^{-1}LT_{1}^iT_{2}^j\varphi\}_{i,j \geq 0}$ is a frame as well. Uniqueness of $L$ holds as for any $f \in H$, we have $f = \underset{i,j \geq 0}{\sum} c_{i,j} T_1^iT_2^j\varphi$ so that 

\begin{displaymath}  
Lf = L \underset{i,j \geq 0}{\sum} c_{i,j} T_1^iT_2^j\varphi = \underset{i,j \geq 0}{\sum} c_{i,j} LT_1^iT_2^jL^{-1} L\varphi
= \underset{i,j \geq 0}{\sum} c_{i,j} V_1^iV_2^jf.
\end{displaymath}

\end{proof}



 The following lemma shows that the kernel of the synthesis operator for a frame generated by unilateral iterations of bounded commuting operators on a fixed vector must be an joint invariant subspace of the two right shift operators on $\ell^{2}(\mathbb{N}_0 \times \mathbb{N}_0)$.
\begin{lemma}
Suppose $\{T_1^iT_2^j\varphi\}_{i,j \geq 0}$ is a frame for H where $T_1, T_2  \in B(H)$ commute. Let $U: \ell^{2}(\mathbb{N}_0 \times \mathbb{N}_0) \to H$ be the synthesis operator of the frame. Let $R_1$ be the right shift in the first component, $R_2$ the right shift in the second component. Then $Ker(U)$ is invariant under $R_1$ and $R_2$.
\end{lemma}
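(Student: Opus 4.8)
The plan is to reduce everything to the two intertwining identities $U R_1 = T_1 U$ and $U R_2 = T_2 U$ as operators on $\ell^2(\mathbb{N}_0 \times \mathbb{N}_0)$, from which invariance of $Ker(U)$ under both shifts is immediate.

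First I would recall that the synthesis operator acts by $Uc = \sum_{i,j \geq 0} c_{i,j}\, T_1^i T_2^j \varphi$ for $c = (c_{i,j}) \in \ell^2(\mathbb{N}_0 \times \mathbb{N}_0)$, and that this series converges in $H$ because $U$ is bounded (the synthesis operator of a frame is bounded and the series converges unconditionally). The right shift $R_1$ sends $(c_{i,j})$ to the sequence whose $(i,j)$-entry is $c_{i-1,j}$ for $i \geq 1$ and $0$ when $i = 0$, and $R_2$ shifts analogously in the second index; both are bounded (indeed isometric) on $\ell^2(\mathbb{N}_0 \times \mathbb{N}_0)$, so $R_1 c$ and $R_2 c$ again lie in $\ell^2$ and $U(R_1 c)$, $U(R_2 c)$ converge.

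Next I would compute $U(R_1 c)$ directly. Since the $i=0$ row of $R_1 c$ vanishes, the reindexing $k = i-1$ gives
\[
U(R_1 c) = \sum_{i \geq 1,\, j \geq 0} c_{i-1,j}\, T_1^i T_2^j \varphi = \sum_{k,\, j \geq 0} c_{k,j}\, T_1^{k+1} T_2^j \varphi = T_1 \sum_{k,\, j \geq 0} c_{k,j}\, T_1^{k} T_2^j \varphi = T_1(Uc),
\]
where the factor $T_1$ is pulled outside the series using its continuity. The parallel computation yields $U(R_2 c) = T_2(Uc)$.

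Finally, if $c \in Ker(U)$ then $Uc = 0$, so $U(R_1 c) = T_1(Uc) = 0$ and $U(R_2 c) = T_2(Uc) = 0$, i.e. $R_1 c, R_2 c \in Ker(U)$, proving the claimed invariance. The argument is essentially a routine intertwining calculation; the only point deserving care is the interchange of the bounded operators $T_1, T_2$ with the infinite sum together with the reindexing, which is legitimate precisely because the frame synthesis series converges unconditionally in $H$ and $T_1, T_2$ are continuous. (I note that the commutativity of $T_1$ and $T_2$ is not actually needed for this particular lemma, only boundedness; it becomes essential elsewhere to make the double-indexed family well defined as iterations.)
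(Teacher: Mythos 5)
Your proof is correct and follows essentially the same route as the paper's: both compute $U(R_1 c)$ by reindexing the synthesis series, pull $T_1$ outside by continuity to get $U(R_1 c) = T_1(Uc)$, and conclude that $Ker(U)$ is invariant (and similarly for $R_2$). Your explicit packaging of this as the intertwining identities $UR_1 = T_1U$, $UR_2 = T_2U$ and your remark on unconditional convergence are just slightly more careful statements of what the paper does implicitly.
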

\begin{proof}
Let $\{c_{i, j}\}_{i, j \geq 0} \in Ker(U)$. Then $U \{c_{i, j}\}_{i, j \geq 0} =  \underset{i,j \geq 0}{\sum} c_{i,j} T_1^iT_2^j\varphi = 0$.
\newline 
Note that
\begin{displaymath}
U (R_1 \{c_{i, j}\}_{i, j \geq 0}) =  \underset{i,j \geq 0}{\sum} c_{i,j} T_1^{i+1}T_2^j\varphi =  \underset{j \geq 0}{\sum} c_{0,j} T_1T_2^j\varphi + \underset{j \geq 0}{\sum} c_{1,j} T_1^{2}T_2^j\varphi + \ldots
\end{displaymath}
\begin{displaymath}
= T_1(  \underset{j \geq 0}{\sum} c_{0,j} T_2^j\varphi + \underset{j \geq 0}{\sum} c_{1,j} T_1T_2^j\varphi + \ldots) = T_1( \underset{i,j \geq 0}{\sum} c_{i,j} T_1^iT_2^j\varphi) = T_1(0) = 0.
\end{displaymath}
Thus $Ker(U)$ is invariant under $R_1$. Similarly, $Ker(U)$ is invariant under $R_2$
\end{proof}

Note that by \cite{R85}, the functions in $H^2(\mathbb{T}^2)$ are the radial limits of functions in $H^2(\mathbb{D}^2)$. By \cite{M88,R69} every $f \in H^2(\mathbb{D}^2)$ has the form $f(z_1, z_2) = \underset{i,j \geq 0}{\sum} c_{ij} z_{1}^{i}z_{2}^{j}$, where $i, j \in \mathbb{N}_0$ and $z_1, z_2 \in \mathbb{D}$  and the scalar sequence $\{c_{ij}\}_{i, j \geq 0}$ satisfies  $\underset{i,j \geq 0}{\sum}|c_{ij}|^2 < \infty$. Also, by \cite{M88} given $f \in H^2(\mathbb{D}^2)$ such that $f(z_1, z_2) = \underset{i,j \geq 0}{\sum} c_{ij} z_{1}^{i}z_{2}^{j}$, the radial limits of this function exist almost everywhere so that we can identify $f$ with $\tilde{f}(t_1, t_2) = \underset{i,j \geq 0}{\sum} c_{ij} t_{1}^{i}t_{2}^{j} \in H^2(\mathbb{T}^2)$, where $t_1, t_2 \in \mathbb{T}$. Thus all elements of $H^2(\mathbb{T}^2)$ can be written in the form $\underset{i,j \geq 0}{\sum} c_{ij} z^{i}w^{j}$ with $\{c_{ij}\}_{i, j \geq 0} \in \ell^2(\mathbb{N}_0 \times \mathbb{N}_0)$ and $z, w \in \mathbb{T}$.

Let $A: \ell^2(\mathbb{N}_0 \times \mathbb{N}_0) \to H^2(\mathbb{T}^2)$ where $A(\{c_{ij}\}_{i, j \geq 0}) = \underset{i,j \geq 0}{\sum} c_{ij} z^{i}w^{j}$. Then $A$ maps $\ell^2(\mathbb{N}_0 \times \mathbb{N}_0)$ onto $H^2(\mathbb{T}^2)$ and as for any $ f = \underset{i,j \geq 0}{\sum} a_{ij} z^{i}w^{j}  \in H^2(\mathbb{T}^2)$ we have 
\newline
$\|f\|_{H^2(\mathbb{T}^2)} = \|\{a_{ij}\}_{i, j \geq 0}\|_{\ell^2(\mathbb{N}_0 \times \mathbb{N}_0)}$, this map is unitary. 

Let $R_1$ and $R_2$ be the right shifts on $\ell^2(\mathbb{N}_0 \times \mathbb{N}_0)$ in the first and second components respectively. One can show that 
\newline
$AR_1 \{c_{ij}\}_{i, j \geq 0} = S_z A \{c_{ij}\}_{i, j \geq 0}$ and $AR_2 \{c_{ij}\}_{i, j \geq 0} = S_w A \{c_{ij}\}_{i, j \geq 0}$ so that $AR_1A^{-1} = S_z$ and $AR_2A^{-1} = S_w$. Also, as $A$ is unitary we have $AR_1^{*}A^{-1} = S_z^{*}$ and $AR_2^{*}A^{-1} = S_w^{*}$ as well.

 Applying the observations above, one can show that the images of joint invariant subspaces for $R_1$ and $R_2$ of $\ell^{2}(\mathbb{N}_0 \times \mathbb{N}_0)$ under the operator $A$ are exactly the submodules of $H^2(\mathbb{T}^2)$. Using the fact that the image under $A$ of the  kernel of the synthesis operator for a frame  of the form $\{T_{1}^iT_{2}^j\varphi\}_{i,j \geq 0}$ is then a submodule in $H^2(\mathbb{T}^2)$, assuming the right shift operators doubly commute on this space implies that the corresponding submodule is of Beurling-type.
\begin{lemma}\label{beur}
Let $\{T_{1}^iT_{2}^j\varphi\}_{i,j \geq 0}$ be an overcomplete frame for $H$ where $T_1, T_2 \in B(H)$ commute. Let $U: \ell^2(\mathbb{N}_0 \times \mathbb{N}_0) \to H$ be the synthesis operator for the frame. Assume that $R_1, R_2$ doubly commute on $Ker(U)$. Then the submodule $Ker(V) = Ker(U \mathcal{F}) \subset H^2(\mathbb{T}^2)$ is of the form $\phi H^2(\mathbb{T}^2)$, where $\phi(z,w)$ is an inner function. 
\end{lemma}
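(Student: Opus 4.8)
The plan is to invoke Mandrekar's theorem: I will show that $S_z$ and $S_w$ doubly commute on $Ker(V)$, obtaining this by transporting the hypothesized double commutation of $R_1, R_2$ on $Ker(U)$ across the unitary $A$. Throughout I read $\mathcal{F} = A^{-1}$, the map sending a function $f \in H^2(\mathbb{T}^2)$ to its coefficient sequence in $\ell^2(\mathbb{N}_0 \times \mathbb{N}_0)$, so that $V = U A^{-1}$ and therefore $Ker(V) = A(Ker(U))$ (because $A$ is a bijection, $v \in Ker(V)$ iff $A^{-1}v \in Ker(U)$).

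First I would confirm that $Ker(V)$ is a nontrivial proper submodule, so that Mandrekar's hypotheses are met. By the preceding lemma, $Ker(U)$ is invariant under both $R_1$ and $R_2$; by the observation recorded just before the statement, the image under $A$ of a joint $(R_1,R_2)$-invariant subspace is a submodule of $H^2(\mathbb{T}^2)$, so $Ker(V) = A(Ker(U))$ is shift-invariant. Nontriviality is exactly where \emph{overcompleteness} is used: an overcomplete frame is not a Riesz basis, so its synthesis operator has $Ker(U) \neq \{0\}$, whence $Ker(V) \neq \{0\}$; and since the frame spans the nonzero space $H$, the operator $U$ is nonzero, so $Ker(U) \neq \ell^2(\mathbb{N}_0 \times \mathbb{N}_0)$ and $Ker(V) \neq H^2(\mathbb{T}^2)$.

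Next I would transfer the double commutation. Since $Ker(U)$ is $(R_1,R_2)$-invariant and $Ker(V)$ is $(S_z,S_w)$-invariant, the restriction $W := A|_{Ker(U)} : Ker(U) \to Ker(V)$ is a unitary. Using $A R_1 A^{-1} = S_z$ and $A R_2 A^{-1} = S_w$ on all of $\ell^2(\mathbb{N}_0 \times \mathbb{N}_0)$, together with invariance, one checks directly that $W\,(R_1|_{Ker(U)})\,W^{-1} = S_z|_{Ker(V)}$ and $W\,(R_2|_{Ker(U)})\,W^{-1} = S_w|_{Ker(V)}$. Because $W$ is unitary, conjugation by $W$ carries adjoints to adjoints and compressions to compressions, so the double-commutation relation on $Ker(U)$ — that each restricted shift commutes with the other and with the adjoint of the other — conjugates to the corresponding relation for $S_z|_{Ker(V)}$ and $S_w|_{Ker(V)}$. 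Hence $S_z$ and $S_w$ doubly commute on $Ker(V)$, and Mandrekar's theorem yields $Ker(V) = \phi H^2(\mathbb{T}^2)$ for some inner $\phi(z,w)$.

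The step requiring the most care is the bookkeeping around restriction versus compression. The adjoint of an isometry restricted to an invariant subspace is the \emph{compression} of the global adjoint to that subspace, namely $(R_2|_{Ker(U)})^* = P_{Ker(U)} R_2^* |_{Ker(U)}$, which differs from $R_2^*|_{Ker(U)}$ precisely because $Ker(U)$ need not be $R_2^*$-invariant; this is what makes the hypothesis a genuine assumption rather than an automatic consequence of the global identity $R_1 R_2^* = R_2^* R_1$. What keeps the transfer clean is that a single unitary $W$ intertwines the restrictions on both sides, so unitary equivalence alone propagates the compressed commutation relation without any separate manipulation of the projections. I would nonetheless verify explicitly that $Ker(V)$ is $(S_z,S_w)$-invariant (so that the restricted shifts really are isometries) before appealing to Mandrekar's double-commutation criterion.
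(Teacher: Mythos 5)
Your proposal is correct and follows essentially the same route as the paper: transport the double commutation of $R_1,R_2$ on $Ker(U)$ across the unitary $A$ (using $AR_iA^{-1}=S_z,S_w$ and $AR_i^*A^{-1}=S_z^*,S_w^*$) to conclude that $S_z,S_w$ doubly commute on $Ker(V)=A(Ker(U))$, and then apply Mandrekar's theorem. Your additional bookkeeping — verifying nontriviality of $Ker(V)$ via overcompleteness and distinguishing the adjoint of a restriction from the restriction of the adjoint — is a welcome clarification of points the paper's proof passes over silently, but it does not change the argument.
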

\begin{proof}
Suppose $R_1, R_2$ doubly commute on $Ker(U)$. As $S_z$ and $S_w$ commute on $H^2(\mathbb{T}^2)$, we want to show $S_z$ and $S_w^{*}$ commute on the invariant subspace $A(Ker(U))$. 
\newline
Given $R_1 R_2^{*} = R_2^{*} R_1$ on $Ker(U)$, we have on $A(Ker(U))$, 
\begin{displaymath}
S_zS_w^{*}= AR_1 A^{-1} A R_2^{*}A^{-1} =AR_1 R_2^{*}A^{-1} = AR_2^{*} R_1A^{-1} = AR_2^{*}A^{-1} A R_1A^{-1} =  S_w^{*} S_z.
\end{displaymath} Also, it holds that $A(Ker(U)) = Ker(U \mathcal{F})$ and therefore by \cite{M88}, $Ker(U \mathcal{F}) = \phi H^2(\mathbb{T}^2)$ for some inner function $\phi(z,w)$.
\end{proof}

The following fact is well-known; however, we provide it here for clarity. 
\begin{lemma} \label{pars}

The set $\{S_{z}^n S_{w}^m {1}_{\mathbb{T}^2}\}_{m,n \geq 0} = \{z^nw^m\}_{m,n \geq 0}$  forms an orthonormal basis for $H^2(\mathbb{T}^2)$.
\end{lemma}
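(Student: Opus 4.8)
The plan is to verify the two defining properties of an orthonormal basis, orthonormality and completeness, directly from the Fourier-analytic structure of $L^2(\mathbb{T}^2)$. First I would record that $1_{\mathbb{T}^2}$ is the constant function $1$ and that $S_z, S_w$ act by multiplication by $z$ and $w$; since they commute, $S_z^n S_w^m 1_{\mathbb{T}^2} = z^n w^m$, which justifies the stated equality of the two sets.

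For orthonormality I would compute the inner products directly. Since $\mu$ is the normalized Haar measure on $\mathbb{T}^2 = \mathbb{T} \times \mathbb{T}$, it is a product measure, so the integral factors as
\begin{displaymath}
\langle z^n w^m, z^{n'} w^{m'} \rangle = \int_{\mathbb{T}^2} z^{n-n'} w^{m-m'} \, d\mu = \left( \int_{\mathbb{T}} z^{n-n'} \, dz \right)\left( \int_{\mathbb{T}} w^{m-m'} \, dw \right).
\end{displaymath}
Each one-variable integral equals $1$ when the exponent is $0$ and $0$ otherwise, so the product equals $\delta_{nn'}\delta_{mm'}$, giving orthonormality.

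For completeness I would invoke the standard fact that the full family of exponentials $\{z^m w^n\}_{(m,n) \in \mathbb{Z}^2}$ is an orthonormal basis for $L^2(\mathbb{T}^2)$. Given any $f \in H^2(\mathbb{T}^2) \subseteq L^2(\mathbb{T}^2)$, expand it in this basis as $f = \sum_{(m,n) \in \mathbb{Z}^2} \widehat{f}(m,n)\, z^m w^n$, where $\widehat{f}(m,n) = \int_{\mathbb{T}^2} f \, \overline{z}^m \overline{w}^n \, d\mu$. By the very definition of $H^2(\mathbb{T}^2)$ these coefficients vanish whenever $m < 0$ or $n < 0$, so the expansion collapses to $f = \sum_{m,n \geq 0} \widehat{f}(m,n)\, z^m w^n$, exhibiting $f$ as a norm-limit of finite linear combinations of the monomials $\{z^n w^m\}_{n,m \geq 0}$. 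Hence this family spans $H^2(\mathbb{T}^2)$, and being orthonormal it is an orthonormal basis.

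There is no substantive obstacle here; the statement is genuinely foundational. The only points requiring care are citing correctly that the torus exponentials form an orthonormal basis for $L^2(\mathbb{T}^2)$ and observing that $H^2(\mathbb{T}^2)$ is, by definition, exactly the closed span of the positively-indexed monomials, so that completeness within the subspace is immediate once the ambient $L^2$ expansion is truncated.
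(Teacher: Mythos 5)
Your proof is correct and follows essentially the same route as the paper's (the paper in fact only remarks that the lemma is well-known, its written-out verification being the same direct computation): orthonormality via factoring the Haar integral over the product $\mathbb{T}\times\mathbb{T}$, and completeness by expanding $f\in H^2(\mathbb{T}^2)$ in the exponential basis and noting that the negatively-indexed coefficients vanish by definition. No gaps.
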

\begin{lemma} \label{twovar}
Let $M \subset H^2(\mathbb{T}^2)$ be a submodule with infinite codimension. Set $K = H^2(\mathbb{T}^2) \ominus M$ and $S_{K_z} = P_{K} \left.S_{z}\right|_{K} $ and  $S_{K_w} = P_{K} \left.S_{w}\right|_{K}$. 
Then $S_{K_z}^m S_{K_w}^n P_{K} {1}_{\mathbb{T}^2} = P_{K} z^mw^n$ for all $m, n \geq 0$.
\end{lemma}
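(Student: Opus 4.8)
The plan is to reduce the whole statement to a single intertwining identity between the compressed shifts and the projection $P_K$, and then iterate. The essential structural input is that $M$ is a \emph{submodule}: it is invariant under both $S_z$ and $S_w$, so that $K = H^2(\mathbb{T}^2)\ominus M$ is invariant under $S_z^{*}$ and $S_w^{*}$. Note that the infinite-codimension hypothesis is not actually needed for this computation; it is carried along only for use in the surrounding results.

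First I would establish the commutation relations $S_{K_z}P_K = P_K S_z$ and $S_{K_w}P_K = P_K S_w$, viewed as operators on $H^2(\mathbb{T}^2)$. To obtain the first, take any $f \in H^2(\mathbb{T}^2)$ and decompose $f = P_K f + P_M f$ with $P_M f \in M$. Since $S_{K_z} = P_K \left.S_z\right|_K$ and $S_z M \subseteq M$, the cross term satisfies $P_K S_z P_M f = 0$, because $S_z P_M f \in M$ is orthogonal to $K$. Hence $P_K S_z f = P_K S_z P_K f = S_{K_z} P_K f$. The identity for $w$ is proved identically, using $S_w M \subseteq M$ in place of $S_z M \subseteq M$.

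Next I would iterate these base identities. A straightforward induction on the exponents yields $S_{K_z}^m P_K = P_K S_z^m$ and $S_{K_w}^n P_K = P_K S_w^n$ for all $m,n \geq 0$; at each step one pushes a single shift through $P_K$ using the base relation and then applies the inductive hypothesis. Combining these and recalling from Lemma \ref{pars} that $S_z^m S_w^n 1_{\mathbb{T}^2} = z^m w^n$, I compute
\[
S_{K_z}^m S_{K_w}^n P_K 1_{\mathbb{T}^2} = S_{K_z}^m P_K S_w^n 1_{\mathbb{T}^2} = P_K S_z^m S_w^n 1_{\mathbb{T}^2} = P_K z^m w^n,
\]
which is the desired identity.

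The only point requiring genuine care — and the main (if modest) obstacle — is the base intertwining relation, where the submodule property of $M$ (double shift-invariance) is precisely what forces the cross term $P_K S_z P_M f$ to vanish. Everything past that step is routine iteration, so I would keep the emphasis on verifying that the invariance $S_z M \subseteq M$ and $S_w M \subseteq M$ is used correctly when commuting each shift past $P_K$.
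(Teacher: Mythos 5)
Your proposal is correct and follows essentially the same route as the paper: both arguments hinge on the observation that shift-invariance of $M$ kills the cross term $P_K S_z P_M f$, giving the semi-invariance relation that lets compressions of powers be identified with powers of compressions. Your organization via the intertwining identity $S_{K_z}P_K = P_K S_z$ and induction is just a slightly more streamlined phrasing of the paper's computation, and your remark that infinite codimension is not needed here is also accurate.
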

\begin{proof}
Let $f \in H^2(\mathbb{T}^2)$. Then  $P_{K} S_{z}^m S_{w}^n f = P_{K} S_{z}^m S_{w}^n P_{K} f + P_{K} S_{z}^m S_{w}^n P_{M} f = P_{K} S_{z}^m S_{w}^n P_{K} f$. So that for all $m, n \geq 0$ we have $P_{K} S_{z}^m S_{w}^n =P_{K} S_{z}^m S_{w}^n P_{K}$.
\newline
By the line above also, $P_{K} S_{z}^m  = P_{K} S_{z}^m P_{K} $ and $P_{K} S_{w}^n  = P_{K} S_{w}^n P_{K} $ from which it follows that on $K$, $P_{K} S_{z}^m P_{K} = S_{K_z}^m$ and $P_{K} S_{w}^n P_{K} = S_{K_w}^n$.

In particular, 
\begin{displaymath}
P_{K} z^m w^n =P_{K} S_{z}^m S_{w}^n {1}_{\mathbb{T}^2} = P_{K} S_{z}^m S_{w}^n P_{K} {1}_{\mathbb{T}^2} =
P_{K} S_{z}^m P_{K} (S_{w}^n P_{K} {1}_{\mathbb{T}^2})
\end{displaymath}
\begin{displaymath}
 = P_{K} S_{z}^m P_{K} P_{K} (S_{w}^n P_{K} {1}_{\mathbb{T}^2}) = S_{K_z}^m(P_{K} S_{w}^n P_{K} {1}_{\mathbb{T}^2}) =  S_{K_z}^m S_{K_w}^n P_{K} {1}_{\mathbb{T}^2}.
\end{displaymath}
\end{proof}

The following result shows that any frame given by forward iterations of a pair of bounded commuting operators on a single vector has the property that the operator pair must be similar to the two-variable Jordan block, that is the the pair  $(S_{K_z}, S_{K_w})$, on an infinite-dimensional quotient module $K$. 

\begin{theorem}\label{char}
Let $\{T_1^iT_2^j\varphi\}_{i,j \geq 0} \subset H$, where $T_1, T_2  \in B(H)$ commute. Then $\{T_1^iT_2^j\varphi\}_{i,j \geq 0}$ is an overcomplete frame if and only if there exists a nontrivial submodule $M \subset H^2(\mathbb{T}^2)$ with $dim(H^2(\mathbb{T}^2) \ominus M) = \infty$ such that $(T_1, T_2, \varphi)\cong (S_{K_z}, S_{K_w}, P_{K} {1}_{\mathbb{T}^2})$ where $P_{K}$ is the orthogonal projection onto the quotient module $K = H^2(\mathbb{T}^2) \ominus M$.  
\end{theorem}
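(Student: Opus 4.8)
My plan is to prove the two implications separately, in both cases transporting the problem to $H^2(\mathbb{T}^2)$ through the synthesis operator of the system and the unitary $A\colon\ell^2(\mathbb{N}_0\times\mathbb{N}_0)\to H^2(\mathbb{T}^2)$ fixed above. For the forward implication I would start from an overcomplete frame $\{T_1^iT_2^j\varphi\}_{i,j\ge0}$ and its synthesis operator $U\colon\ell^2(\mathbb{N}_0\times\mathbb{N}_0)\to H$, $U(\{c_{ij}\})=\sum_{i,j}c_{ij}T_1^iT_2^j\varphi$. Being a frame synthesis operator, $U$ is bounded and surjective, and its restriction to $\mathrm{Ker}(U)^{\perp}$ is a topological isomorphism onto $H$ by the bounded inverse theorem. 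From $Ue_{ij}=T_1^iT_2^j\varphi$ I read off $UR_1=T_1U$, $UR_2=T_2U$, and $Ue_{00}=\varphi$. I then set $M:=A(\mathrm{Ker}(U))$; since $\mathrm{Ker}(U)$ is jointly $R_1,R_2$-invariant (by the lemma above) and $A$ maps joint invariant subspaces to submodules, $M$ is a submodule. Overcompleteness makes $U$ non-injective, so $\mathrm{Ker}(U)\neq\{0\}$ and $M$ is nontrivial; and as $A$ is unitary, $K:=H^2(\mathbb{T}^2)\ominus M=A(\mathrm{Ker}(U)^{\perp})$ is isomorphic to $H$, hence infinite-dimensional.

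To exhibit the similarity I would take $W:=(U|_{\mathrm{Ker}(U)^{\perp}})^{-1}\colon H\to\mathrm{Ker}(U)^{\perp}$ and define $L:=AW\colon H\to K$, which is invertible in $B(H,K)$ as a composite of $W$ with the unitary $A|_{\mathrm{Ker}(U)^{\perp}}$. The computation rests on three identities: $UW=I_H$, $WU=P_{\mathrm{Ker}(U)^{\perp}}$, and $AP_{\mathrm{Ker}(U)^{\perp}}=P_KA$ (the last because $A$ is unitary with $A(\mathrm{Ker}(U)^{\perp})=K$). Using $T_1=UR_1W$ I would then compute
\begin{displaymath}
LT_1=AW\,UR_1W=A(WU)R_1W=AP_{\mathrm{Ker}(U)^{\perp}}R_1W=P_KAR_1W=P_KS_z\,AW=P_KS_zL,
\end{displaymath}
and since $\mathrm{Ran}(L)\subseteq K$ this equals $S_{K_z}L$; symmetrically $LT_2=S_{K_w}L$. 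Finally $L\varphi=AW\,Ue_{00}=AP_{\mathrm{Ker}(U)^{\perp}}e_{00}=P_KAe_{00}=P_K1_{\mathbb{T}^2}$, so $(T_1,T_2,\varphi)\cong(S_{K_z},S_{K_w},P_K1_{\mathbb{T}^2})$.

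For the converse, given such an $M$, Lemma \ref{similarity} reduces the claim to showing that $\{S_{K_z}^iS_{K_w}^jP_K1_{\mathbb{T}^2}\}_{i,j\ge0}$ is an overcomplete frame for $K$. By Lemma \ref{twovar} this system is $\{P_Kz^iw^j\}_{i,j\ge0}$, the image under $P_K$ of the orthonormal basis $\{z^iw^j\}$ of $H^2(\mathbb{T}^2)$ (Lemma \ref{pars}); for $g\in K$ one has $\sum_{i,j}|\langle g,P_Kz^iw^j\rangle|^2=\sum_{i,j}|\langle g,z^iw^j\rangle|^2=\|g\|^2$, so the system is a Parseval frame for $K$. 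It is overcomplete because its synthesis operator is $P_KA$, whose kernel is $A^{-1}(M)\neq\{0\}$ since $M$ is nontrivial, so the frame is not a Riesz basis.

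The backward direction is essentially the remark that a compression of an orthonormal basis is a Parseval frame, so I expect the main obstacle to lie in the forward direction, and specifically in verifying that $L$ intertwines $T_1,T_2$ with the \emph{compressed} shifts $S_{K_z},S_{K_w}$ rather than the shifts $S_z,S_w$ themselves. This succeeds precisely because $\mathrm{Ker}(U)^{\perp}$ need not be $R_1,R_2$-invariant, so the compression $P_K$ enters automatically through the identity $WU=P_{\mathrm{Ker}(U)^{\perp}}$ and must be tracked carefully through the chain of equalities; keeping that projection in the right place is the delicate point of the argument.
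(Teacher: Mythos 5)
Your proposal is correct and follows essentially the same route as the paper: both directions pass through the synthesis operator composed with the identification of $\ell^2(\mathbb{N}_0\times\mathbb{N}_0)$ with $H^2(\mathbb{T}^2)$, take $M$ to be the image of $\mathrm{Ker}(U)$, and reduce the converse to the Parseval frame $\{P_Kz^iw^j\}$ via Lemmas \ref{pars}, \ref{twovar} and \ref{similarity}. The only difference is cosmetic: you verify the intertwining at the operator level through the identities $UW=I_H$, $WU=P_{\mathrm{Ker}(U)^{\perp}}$ and $AP_{\mathrm{Ker}(U)^{\perp}}=P_KA$, whereas the paper checks $WS_{K_z}^mS_{K_w}^nW^{-1}f=T_1^mT_2^nf$ by expanding an arbitrary $f$ in the frame.
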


\begin{proof}
Assume there exists a nontrivial invariant subspace $M \subset H^2(\mathbb{T}^2)$ with 
\newline
$dim(H^2(\mathbb{T}^2) \ominus M) = \infty$ such that $(T_1, T_2, \varphi)\cong (S_{K_z} S_{K_w}, P_{K}{1}_{\mathbb{T}^2})$ where $P_{K}$ is the orthogonal projection onto $K = H^2(\mathbb{T}^2) \ominus M$ and $S_{K_z} = P_{K} \left.S_{z}\right|_{K} $ and  $S_{K_w} = P_{K} \left.S_{w}\right|_{K}$. By Lemma \ref{pars} and Lemma \ref{twovar}, it follows that the system $\{S_{K_z}^m S_{K_w}^n P_{K} {1}_{\mathbb{T}^2}\}_{m, n \geq 0} = \{P_{K} z^mw^n\}_{m, n \geq 0}$  is an overcomplete Parseval frame for $K$. Hence, by 
Lemma \ref{similarity}, we have  $\{T_1^iT_2^j\varphi\}_{i,j \geq 0}$ is an overcomplete frame for $H$. Conversely, suppose $\{T_1^iT_2^j\varphi\}_{i,j \geq 0}$ is an overcomplete frame for $H$. Define $V = U \mathcal{F}$. Then $V: H^2(\mathbb{T}^2) \to H$ is surjective so that $Ker(V)$ has infinite codimension. That is, $K = H^2(\mathbb{T}^2) \ominus Ker(V)$ satisfies $dim(K) =\infty$. Note that $V {1}_{\mathbb{T}^2} = \varphi$ since $\mathcal{F} {1}_{\mathbb{T}^2} $ gives the scalar sequence in $\ell^2(\mathbb{N}_0 \times \mathbb{N}_0) $ that has the value $1$ in the $(0, 0)$ component and the value $0$ everywhere else. Thus $ V {1}_{\mathbb{T}^2} = U \mathcal{F} {1}_{\mathbb{T}^2} = \varphi$. Similarly for each pair $(i, j)$, $VS_{z}^iS_{w}^j {1}_{\mathbb{T}^2} = Vz^iw^j = T_{1}^iT_{2}^j \varphi$. Set $W = \left.V\right|_{K}$. Then $\varphi = V {1}_{\mathbb{T}^2} = V(P_{K} {1}_{\mathbb{T}^2} + P_{Ker(V)} {1}_{\mathbb{T}^2}) = V P_{K} {1}_{\mathbb{T}^2} = W P_{K} {1}_{\mathbb{T}^2}$ and for every $i, j \geq 0$, $T_{1}^iT_{2}^j \varphi = Vz^iw^j = V P_{K} z^iw^j = W P_{K} z^iw^j= W S_{K_z}^i S_{K_w}^j P_{K} {1}_{\mathbb{T}^2}$. 
\newline
Note that $W \in B(K, H)$ is invertible and as for any $f \in H$, $f = \underset{i,j \geq 0}{\sum} c_{i,j} T_1^iT_2^j\varphi$ so that 
\begin{displaymath}
W S_{K_z}^m S_{K_w}^n W^{-1}f = W S_{K_z}^m S_{K_w}^n W^{-1} \underset{i,j \geq 0}{\sum} c_{i,j} T_1^iT_2^j\varphi = W S_{K_z}^m S_{K_w}^n  \underset{i,j \geq 0}{\sum} c_{i,j} W^{-1}T_1^iT_2^j\varphi 
\end{displaymath}
\begin{displaymath}
=W S_{K_z}^m S_{K_w}^n  \underset{i,j \geq 0}{\sum} c_{i,j} S_{K_z}^i S_{K_w}^j P_{K} {1}_{\mathbb{T}^2} = W \underset{i,j \geq 0}{\sum} c_{i,j} S_{K_z}^{i+m} S_{K_w}^{j+n} P_{K} {1}_{\mathbb{T}^2}  
\end{displaymath}
\begin{displaymath}
=\underset{i,j \geq 0}{\sum} c_{i,j} W S_{K_z}^{i+m} S_{K_w}^{j+n} P_{K} {1}_{\mathbb{T}^2} = \underset{i,j \geq 0}{\sum} c_{i,j} T_1^{i+m}T_2^{j+n}\varphi = T_1^{m}T_2^{n} \underset{i,j \geq 0}{\sum} c_{i,j} T_1^iT_2^j\varphi = T_1^{m}T_2^{n} f.
\end{displaymath}
\end{proof}

The following corollary provides a seamless extension from Theorem 3.6 in \cite{CHP20}. Applying Lemma \ref{beur} we impose a condition which ensures that the quotient module $K$ in Theorem \ref{char} is given by a submodule of Beurling type. This condition enables us to remove the assumption that the quotient module $K$ has infinite dimension in Theorem \ref{char} as by Proposition \ref{infcodim} proper submodules of Beurling-type always have infinite codimension. 
It follows directly from Lemma 1 in \cite{R85} that if for some inner functions $\phi_1$ and $\phi_2$ we have
$\phi_1 H^2(\mathbb{T}^2) = \phi_2 H^2(\mathbb{T}^2)$, then $\phi_1(z,w)/\phi_2(z,w)$ is a constant. 
Hence when 
$\phi_1 H^2(\mathbb{T}^2) = \phi_2 H^2(\mathbb{T}^2)$, the inner functions $\phi_1$ and $\phi_2$ differ by a unimodular constant factor, $\phi_1(z,w)/\phi_2(z,w)$. From this, we have uniqueness of the inner function in the following corollary up to a unimodular constant factor.  
 \begin{corollary}\label{B-type}
Let $\{T_1^iT_2^j\varphi\}_{i,j \geq 0} \subset H$, where $T_1, T_2  \in B(H)$ commute, satisfy the property that the operators $R_1, R_2$ doubly commute on the kernel of the synthesis operator for $\{T_1^iT_2^j\varphi\}_{i,j \geq 0}$. Then $\{T_1^iT_2^j\varphi\}_{i,j \geq 0}$ is an overcomplete frame for $H$ if and only if there exists a proper Beurling-type submodule $\phi H^2(\mathbb{T}^2)$, with $\phi (z,w)$ a unique inner function, such that $(T_1, T_2, \varphi)\cong (S_{K_z}, S_{K_w}, P_{K} {1}_{\mathbb{T}^2})$ where $P_{K}$ is the orthogonal projection onto the quotient module $K = H^2(\mathbb{T}^2) \ominus \phi H^2(\mathbb{T}^2$).    
\end{corollary}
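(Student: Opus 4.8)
The plan is to deduce the corollary almost entirely from Theorem \ref{char}, using the doubly-commuting hypothesis precisely to upgrade the abstract infinite-codimension submodule produced there to one of Beurling type, and using Proposition \ref{infcodim} to see that the infinite-dimensionality of the quotient module then becomes automatic rather than an imposed hypothesis.

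For the forward direction I would assume $\{T_1^iT_2^j\varphi\}_{i,j\geq 0}$ is an overcomplete frame, let $U$ be its synthesis operator, and set $V = U\mathcal{F}$ so that $Ker(V) = A(Ker(U)) \subset H^2(\mathbb{T}^2)$ is exactly the submodule $M$ appearing in the proof of Theorem \ref{char}. Since $R_1, R_2$ doubly commute on $Ker(U)$ by hypothesis, Lemma \ref{beur} applies and gives $Ker(V) = \phi H^2(\mathbb{T}^2)$ for some inner function $\phi$. Because the frame is overcomplete, $Ker(U)$, and hence $Ker(V)$, is nontrivial, so $\phi$ is nonconstant and $\phi H^2(\mathbb{T}^2)$ is a proper Beurling-type submodule; by Proposition \ref{infcodim} it automatically has infinite codimension, so $K = H^2(\mathbb{T}^2)\ominus \phi H^2(\mathbb{T}^2)$ is infinite-dimensional. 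Theorem \ref{char} then delivers the similarity $(T_1,T_2,\varphi)\cong(S_{K_z},S_{K_w},P_{K}{1}_{\mathbb{T}^2})$. Uniqueness of $\phi$ up to a unimodular constant follows from the remark preceding the corollary: if $\phi_1 H^2(\mathbb{T}^2) = \phi_2 H^2(\mathbb{T}^2) = Ker(V)$, then by Lemma 1 of \cite{R85} the ratio $\phi_1/\phi_2$ is a unimodular constant.

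For the converse I would suppose a proper Beurling-type submodule $\phi H^2(\mathbb{T}^2)$ is given together with the similarity $(T_1,T_2,\varphi)\cong(S_{K_z},S_{K_w},P_{K}{1}_{\mathbb{T}^2})$. By Proposition \ref{infcodim} the submodule $M = \phi H^2(\mathbb{T}^2)$ is a nontrivial submodule of infinite codimension, so $\dim(H^2(\mathbb{T}^2)\ominus M) = \infty$ and the hypotheses of Theorem \ref{char} are satisfied; that theorem then yields that $\{T_1^iT_2^j\varphi\}_{i,j\geq 0}$ is an overcomplete frame. In this direction the doubly-commuting assumption is not invoked, since it is already encoded in the Beurling structure of $M$ through Mandrekar's theorem.

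The step I expect to require the most care is not deep but is the crux of the argument: identifying the submodule $Ker(V)$ produced by Lemma \ref{beur} with the submodule $M$ of Theorem \ref{char}, and confirming that overcompleteness genuinely forces $\phi$ to be nonconstant so that Proposition \ref{infcodim} applies and the infinite-dimensionality hypothesis of Theorem \ref{char} can be discarded. Beyond assembling the earlier results, the only genuinely new assertion is the uniqueness claim, which reduces entirely to the cited rigidity of the Beurling representation of a fixed submodule.
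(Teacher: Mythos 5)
Your proposal is correct and takes essentially the same route as the paper, which disposes of the corollary in two lines by declaring it the special case of Theorem \ref{char} in which $M$ is of Beurling type (via Lemma \ref{beur}) and invoking Proposition \ref{infcodim} for the automatic infinite codimension. Your write-up merely fills in the details the paper leaves implicit --- that overcompleteness forces $Ker(V)$ to be nontrivial so $\phi$ is nonconstant, and that uniqueness of $\phi$ up to a unimodular constant comes from the remark preceding the corollary.
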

\begin{proof}
This theorem holds as a special case of Theorem \ref{char} where the shift-invariant subspace $M$ is of Beurling-type. By Proposition \ref{beur}, Beurling-type shift-invariant subspaces always have infinite codimension.

\end{proof}

The following corollary provides a characterization of the pairs of bounded commuting operators which can be
used to generate a Riesz basis (or minimal frame) for a separable infinite-dimensional Hilbert space.
\begin{corollary}
Let $\{T_1^iT_2^j\varphi\}_{i,j \geq 0} \subset H$, where $T_1, T_2  \in B(H)$ commute. Then $\{T_1^iT_2^j\varphi\}_{i,j \geq 0}$ is a Riesz basis if and only if $(T_1, T_2, \varphi)\cong (S_z, S_w, {1}_{\mathbb{T}^2})$. 
\end{corollary}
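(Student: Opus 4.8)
The plan is to treat this corollary as the \emph{minimal}, non-overcomplete endpoint of Theorem \ref{char}. A Riesz basis is precisely a frame whose synthesis operator $U$ is injective, and I will show that this forces the submodule produced by the machinery of Section 3 to collapse to $\{0\}$, so that the quotient module is all of $H^2(\mathbb{T}^2)$ and the two-variable Jordan block degenerates to the pair $(S_z, S_w)$ itself.

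I would establish the reverse implication directly. Suppose $(T_1,T_2,\varphi)\cong(S_z,S_w,1_{\mathbb{T}^2})$, so that there is an invertible $L\in B(H,H^2(\mathbb{T}^2))$ with $LT_1L^{-1}=S_z$, $LT_2L^{-1}=S_w$, and $L\varphi=1_{\mathbb{T}^2}$. By the remark following Definition \ref{similar} we get $LT_1^iT_2^j\varphi = S_z^iS_w^j 1_{\mathbb{T}^2}=z^iw^j$ for all $i,j\geq 0$. By Lemma \ref{pars} the system $\{z^iw^j\}_{i,j\geq0}$ is an orthonormal basis of $H^2(\mathbb{T}^2)$, and since $L$ is a topological isomorphism, $\{T_1^iT_2^j\varphi\}_{i,j\geq0}=\{L^{-1}(z^iw^j)\}_{i,j\geq0}$ is the image of an orthonormal basis under a bounded invertible operator, hence a Riesz basis. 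It is essential here that Lemma \ref{pars} furnishes an orthonormal basis and not merely a frame; this is exactly what upgrades the conclusion from ``frame'' to ``Riesz basis.''

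For the forward implication, recall that a Riesz basis is a minimal frame, so its expansion coefficients are unique and the synthesis operator $U:\ell^2(\mathbb{N}_0\times\mathbb{N}_0)\to H$ is injective; being also a bounded surjection by the frame property, it is a topological isomorphism, i.e.\ $Ker(U)=\{0\}$. I would then run the construction from the proof of Theorem \ref{char}: set $V=U\mathcal{F}:H^2(\mathbb{T}^2)\to H$, which is invertible since $\mathcal{F}$ is unitary and $U$ is invertible, and note $Ker(V)=A(Ker(U))=\{0\}$. Thus the submodule is $M=\{0\}$ and the quotient module is $K=H^2(\mathbb{T}^2)$, so that $P_K=I$, $S_{K_z}=S_z$, $S_{K_w}=S_w$, and $P_K 1_{\mathbb{T}^2}=1_{\mathbb{T}^2}$. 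Exactly as in the proof of Theorem \ref{char}, $V1_{\mathbb{T}^2}=\varphi$ and $VS_z^iS_w^j1_{\mathbb{T}^2}=Vz^iw^j=T_1^iT_2^j\varphi$, which yields the intertwining $VS_z=T_1V$ and $VS_w=T_2V$ on the orthonormal basis $\{z^iw^j\}_{i,j\geq0}$ and hence on all of $H^2(\mathbb{T}^2)$. Taking $L=V^{-1}\in B(H,H^2(\mathbb{T}^2))$ then gives $LT_1L^{-1}=S_z$, $LT_2L^{-1}=S_w$, and $L\varphi=V^{-1}\varphi=1_{\mathbb{T}^2}$, that is, $(T_1,T_2,\varphi)\cong(S_z,S_w,1_{\mathbb{T}^2})$. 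Alternatively, once the intertwining is in hand one may simply invoke Lemma \ref{similarity}.

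I do not expect a deep obstacle; the content is bookkeeping, and the corollary is the trivial-submodule case of Theorem \ref{char}, complementary to the overcomplete (nontrivial-submodule) case treated there. The one point requiring care is the correct operator-theoretic reformulation of ``Riesz basis'': one must verify that minimality of the frame is equivalent to injectivity of $U$, and hence, together with the frame property, to $U$ being a topological isomorphism. Once $Ker(U)=\{0\}$ is secured, the submodule $M$ collapses to $\{0\}$, the quotient module becomes the whole space, the two-variable Jordan block is literally $(S_z,S_w)$, and the similarity is immediate.
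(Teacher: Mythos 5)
Your proposal is correct and follows essentially the same route as the paper: the reverse direction transports the orthonormal basis $\{z^iw^j\}_{i,j\geq 0}$ of Lemma \ref{pars} through the topological isomorphism given by the similarity, and the forward direction uses bijectivity of the synthesis operator to get $Ker(V)=\{0\}$ and then sets $K=H^2(\mathbb{T}^2)$ in the proof of Theorem \ref{char}. Your write-up merely makes explicit a few steps (minimality $\Rightarrow$ injectivity of $U$, and the intertwining computation) that the paper leaves implicit.
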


\begin{proof}
Suppose  $(T_1, T_2, \varphi) \cong (S_z, S_w, {1}_{\mathbb{T}^2})$. We know that  $\{S_{z}^n S_{w}^m {1}_{\mathbb{T}^2}\}_{m,n \geq 0} = \{z^nw^m\}_{m,n \geq 0}$ \,  forms an orthonormal basis for $H^2(\mathbb{T}^2)$. It follows that there exists a topological isomorphism, $L$, such that $\{L S_{z}^n S_{w}^m {1}_{\mathbb{T}^2}\}_{n,m \geq 0} = \{T_1^nT_2^m\varphi\}_{n,m \geq 0}$. Since the image of an orthonormal basis under a topological isomorphism is a Riesz basis, we have  $\{T_1^iT_2^j\varphi\}_{i,j \geq 0}$ is a Riesz basis for $H$.
For the other direction, assume that $\{T_1^iT_2^j\varphi\}_{i,j \geq 0}$ is a Riesz basis. Define $V = U \mathcal{F}$. Then as $U$ is bijective, we get $V \in B(H^2(\mathbb{T}^2), H)$ is invertible and the result follows by setting $K = H^2(\mathbb{T}^2)$ in the proof of Theorem \ref{char} above.
\end{proof}
\section{Properties of Frames of Iterations via Commuting Operators}
The following proposition shows that for a frame of the form $\{T_1^iT_2^j\varphi\}_{i,j \geq 0}$ the adjoint of the iterating operator pair $(T^*_1, T^*_2)$ reflects the property as exhibited in the single operator case, that the adjoint of the  iterating operator and goes to zero in the strong operator topology \cite{AA17}.

\begin{proposition}
    Let $\{T_1^iT_2^j\varphi\}_{i,j \geq 0} \subset H$ be a frame for $H$  where $T_1, T_2  \in B(H)$ commute. Then $(T_1^*)^i(T_2^*)^jf \to 0$ as $i, j \to \infty$.
\end{proposition}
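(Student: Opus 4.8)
The plan is to extract the conclusion directly from the two frame inequalities, using the commutativity of the adjoints to recognize the relevant sum as a tail of a convergent series. First I would record that since $T_1T_2 = T_2T_1$ we also have $T_1^*T_2^* = T_2^*T_1^*$, so that the operators $(T_1^*)^m(T_2^*)^n$ are insensitive to the order of the factors and satisfy $\left((T_1^*)^m(T_2^*)^n\right)^* = T_1^mT_2^n$ for all $m,n\ge 0$. Let $0 < C_1 \le C_2$ be frame bounds for $\{T_1^iT_2^j\varphi\}_{i,j\ge 0}$ and fix $f \in H$. The upper frame bound gives
\[
\sum_{i,j \ge 0} |\langle f, T_1^iT_2^j\varphi\rangle|^2 \le C_2\|f\|^2 < \infty,
\]
so the nonnegative double series on the left converges; this is the only use of the upper bound, and it guarantees that the tails of this series vanish.

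Next I would apply the lower frame bound to the vector $(T_1^*)^m(T_2^*)^n f$ and move the adjoints across the inner product. Using $\left((T_1^*)^m(T_2^*)^n\right)^* = T_1^mT_2^n$ together with the commutativity of $T_1,T_2$,
\[
\langle (T_1^*)^m(T_2^*)^n f,\, T_1^iT_2^j\varphi\rangle = \langle f,\, T_1^{m+i}T_2^{n+j}\varphi\rangle,
\]
so that the lower bound reads
\[
C_1\,\|(T_1^*)^m(T_2^*)^n f\|^2 \le \sum_{i,j\ge 0} |\langle f, T_1^{m+i}T_2^{n+j}\varphi\rangle|^2 = \sum_{i\ge m,\ j\ge n} |\langle f, T_1^iT_2^j\varphi\rangle|^2,
\]
the last equality obtained by reindexing $i \mapsto i+m$, $j \mapsto j+n$.

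Finally I would invoke the tail estimate for the convergent series from the first step. Given $\varepsilon > 0$, choose a finite set $F \subset \mathbb{N}_0 \times \mathbb{N}_0$ with $\sum_{(i,j)\notin F} |\langle f, T_1^iT_2^j\varphi\rangle|^2 < C_1\varepsilon$, and set $N = 1 + \max\{\max(i,j) : (i,j) \in F\}$. Then for all $m,n \ge N$ the index set $\{(i,j) : i\ge m,\ j\ge n\}$ is disjoint from $F$, so the upper-right tail is bounded by $C_1\varepsilon$, whence $\|(T_1^*)^m(T_2^*)^n f\|^2 < \varepsilon$. This is precisely the assertion that $(T_1^*)^i(T_2^*)^j f \to 0$ as $i,j \to \infty$.

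The substantive computation is the adjoint shift, which reduces everything to a tail of the frame series, and I expect the only point requiring genuine care to be the tail argument itself: because the series is doubly indexed, one must verify that the upper-right tail $\sum_{i\ge m,\ j\ge n}$ really shrinks to zero as both $m,n \to \infty$ (and not merely that individual terms vanish). This is exactly what the finite-support approximation of a convergent nonnegative series supplies, and it also fixes the meaning of the limit $i,j\to\infty$ in the statement.
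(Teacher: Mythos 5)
Your proposal is correct and follows essentially the same route as the paper's own proof: commute the adjoints onto the test vector, use the lower frame bound to dominate $\|(T_1^*)^m(T_2^*)^nf\|^2$ by the upper-right tail $\sum_{i\ge m,\,j\ge n}|\langle f, T_1^iT_2^j\varphi\rangle|^2$, and use the upper frame bound to show that tail vanishes. If anything, your finite-support argument for why the doubly indexed tail shrinks is spelled out more carefully than in the paper, which simply asserts this from convergence of the series.
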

\begin{proof}
    Note that 
    \begin{displaymath}
        (T_1^*)^i(T_2^*)^j = (T_2^jT_1^i)^* = (T_1^iT_2^j)^* = (T_2^*)^j(T_1^*)^i \, \, \, \forall i, j \geq 0.
        \end{displaymath}
Given $\{T_1^iT_2^j\varphi\}_{i,j \geq 0} \subset H$ is a frame with frame bounds $A , B >0, $ we have 
\begin{displaymath}
\underset{i, j \geq 0}{\sum}|\langle T_{1}^iT_{2}^j \, \varphi, (T_{1}^{m_1}T_{2}^{m_2})^* f \rangle |^2 =  \underset{i, j \geq 0}{\sum} |\langle T_{1}^{i+m_1}T_{2}^{j + m_2} \, \varphi, f \rangle |^2  =  \underset{j \geq m_2}{\sum} \, \, \underset{i \geq m_1}{\sum}|\langle T_{1}^{i}T_{2}^{j} \, \varphi, f \rangle |^2 \leq B \|f\|^2.
\end{displaymath}
Since the sum $\underset{j \geq m_2}{\sum} \, \, \underset{i \geq m_1}{\sum}|\langle T_{1}^{i}T_{2}^{j} \, \varphi, f \rangle |^2 $ converges, this implies that $|\langle T_{1}^{i}T_{2}^{j} \, \varphi, f \rangle |^2 \to 0$ as $m_1, m_2 \to \infty$. Also, as 
\begin{displaymath}
A \|(T_{1}^{m_1}T_{2}^{m_2})^* f\|^2 \leq \underset{i, j \geq 0}{\sum}|\langle T_{1}^iT_{2}^j \, \varphi, (T_{1}^{m_1}T_{2}^{m_2})^* f \rangle |^2  = \underset{j \geq m_2}{\sum} \, \, \underset{i \geq m_1}{\sum}|\langle T_{1}^{i}T_{2}^{j} \, \varphi, f \rangle |^2 
\end{displaymath}
we have $(T_1^*)^{m_1} (T_2^*)^{m_2}f = (T_{1}^{m_1}T_{2}^{m_2})^* f \to 0$ as as $m_1, m_2 \to \infty$
\end{proof}

In the case where the  frame $\{T_1^iT_2^j\varphi\}_{i,j \geq 0} \subset H$ satisfies the conditions of Lemma \ref{beur} we conjecture that the operator pair $(T_1, T_2)$ also reflects the property, as exhibited in the single operator case, that the iterating operator goes to zero in the strong operator topology \cite{CHP20}.

\begin{conjecture}
 Assume $\{T_1^iT_2^j\varphi\}_{i,j \geq 0}$ is an overcomplete frame for $H$, where $T_1, T_2  \in B(H)$ commute, such that that the operators $R_1, R_2$ doubly commute on the kernel of the synthesis operator  for the frame. Then for each $f \in H$ we have that  $T_1^iT_2^jf \to 0$ as $i, j \to \infty$.
\end{conjecture}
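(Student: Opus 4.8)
The plan is to reduce the statement to the concrete two-variable Jordan block on a Beurling quotient module and then to run a frequency-shift estimate on the Fourier side. First, since the frame is overcomplete and $R_1,R_2$ doubly commute on the kernel of the synthesis operator, Corollary \ref{B-type} supplies a proper Beurling-type submodule $\phi H^2(\mathbb{T}^2)$ with $\phi$ inner, together with an invertible $L\in B(H,K)$ realizing the similarity $(T_1,T_2,\varphi)\cong(S_{K_z},S_{K_w},P_{K}1_{\mathbb{T}^2})$, where $K=H^2(\mathbb{T}^2)\ominus\phi H^2(\mathbb{T}^2)$. From $LT_1^iT_2^jL^{-1}=S_{K_z}^iS_{K_w}^j$ one gets $T_1^iT_2^jf=L^{-1}S_{K_z}^iS_{K_w}^j(Lf)$, so by boundedness of $L^{-1}$ it suffices to prove $S_{K_z}^mS_{K_w}^n g\to 0$ as $m,n\to\infty$ for every $g\in K$ (as $f$ ranges over $H$, the vector $g=Lf$ ranges over all of $K$).

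Second, I would upgrade the identity of Lemma \ref{twovar} from the single vector $P_{K}1_{\mathbb{T}^2}$ to an arbitrary $g\in K$: using $S_z^kM\subset M$ and $S_w^kM\subset M$ exactly as in that proof, one checks $S_{K_z}^mS_{K_w}^n g=P_K(z^mw^n g)$. Because $\phi$ is inner, multiplication $M_\phi$ is an isometry onto $\phi H^2(\mathbb{T}^2)$, so $P_{\phi H^2(\mathbb{T}^2)}=M_\phi M_\phi^{*}$ and $M_\phi^{*}F=P_{H^2(\mathbb{T}^2)}(\overline{\phi}F)$. The orthogonal splitting $P_K F=F-P_{\phi H^2(\mathbb{T}^2)}F$ then yields $\|P_K(z^mw^n g)\|^2=\|g\|^2-\|P_{H^2(\mathbb{T}^2)}(\overline{\phi}\,z^mw^n g)\|^2$, where I used $\|z^mw^n g\|=\|g\|$.

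Third is the heart of the argument. Writing $h=\overline{\phi}g\in L^2(\mathbb{T}^2)$ (so $\|h\|=\|g\|$ since $|\phi|=1$ a.e.), the function $\overline{\phi}\,z^mw^n g=z^mw^n h$ has Fourier coefficients $\widehat{h}(a-m,b-n)$, whence $\|P_{H^2(\mathbb{T}^2)}(z^mw^n h)\|^2=\sum_{a\ge -m,\ b\ge -n}|\widehat{h}(a,b)|^2$. As $m,n\to\infty$ this increases to $\|h\|^2=\|g\|^2$, because the omitted mass $\sum_{a<-m\ \text{or}\ b<-n}|\widehat h(a,b)|^2$ is dominated by $\sum_{a<-m}|\widehat h|^2+\sum_{b<-n}|\widehat h|^2\to 0$. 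Feeding this back gives $\|S_{K_z}^mS_{K_w}^n g\|^2=\|P_K(z^mw^n g)\|^2\to 0$, which completes the reduction and hence the proof.

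The main obstacle I anticipate is this last step, specifically making the \emph{joint} limit $m,n\to\infty$ rigorous: one must verify that the Fourier mass lost under the projection $P_{H^2(\mathbb{T}^2)}$ vanishes when both indices grow, not merely when one does. This is exactly where inner-ness of $\phi$ is essential, as it simultaneously makes $M_\phi$ isometric (giving the clean Pythagorean splitting) and guarantees $h=\overline{\phi}g\in L^2(\mathbb{T}^2)$ with $\|h\|=\|g\|$, so that the tails of its $\ell^2$ Fourier data are summable and vanish. A secondary point to handle with care is the extension of Lemma \ref{twovar} to an arbitrary $g\in K$, which rests only on the shift-invariance of $M=\phi H^2(\mathbb{T}^2)$ under both $S_z$ and $S_w$.
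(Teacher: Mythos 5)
The statement you are addressing is left as an open conjecture in the paper --- the authors give no proof of it --- so there is no ``paper proof'' to compare against, and your argument has to stand on its own. Having checked it step by step, I believe it does: you appear to have a complete and correct proof, resolving the conjecture affirmatively. The reduction through Corollary \ref{B-type} and the similarity is legitimate (the conjecture's hypotheses are exactly those of the corollary, and $T_1^iT_2^jf=L^{-1}S_{K_z}^iS_{K_w}^j(Lf)$ with $L^{-1}$ bounded); the upgrade of Lemma \ref{twovar} to $S_{K_z}^mS_{K_w}^ng=P_K(z^mw^ng)$ for arbitrary $g\in K$ indeed uses only $S_zM\subset M$ and $S_wM\subset M$; and the identity $\|P_K(z^mw^ng)\|^2=\|g\|^2-\|P_{H^2(\mathbb{T}^2)}(\overline{\phi}\,z^mw^ng)\|^2$ is where the Beurling structure (hence the doubly-commuting hypothesis) is genuinely used, via $P_{\phi H^2(\mathbb{T}^2)}=M_\phi M_\phi^{*}$ with $M_\phi$ isometric --- this is precisely what is unavailable for a general submodule. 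The step that most deserves scrutiny is the one you flag, and you handle it correctly: the residual Fourier mass lives on the index set $\{a<-m\}\cup\{b<-n\}$ (an ``or'', not an ``and'', since $L^2(\mathbb{T}^2)\ominus H^2(\mathbb{T}^2)$ is spanned by the monomials $z^aw^b$ with $a<0$ or $b<0$), and dominating it by the two marginal tails $\sum_{a<-m}\sum_{b}|\widehat{h}(a,b)|^2+\sum_{a}\sum_{b<-n}|\widehat{h}(a,b)|^2$ gives a bound uniform over all $m,n\geq N$, which is exactly what the joint limit requires. It is worth noting that the joint limit is essential and your estimate correctly reflects this: for $\phi(z,w)=\theta(z)$ a nonconstant one-variable inner function, $S_{K_w}$ is an isometry on $K$, so $S_{K_z}^mS_{K_w}^ng$ does not tend to zero along $n\to\infty$ with $m$ fixed, and indeed in that regime your bound does not become small. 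The only cosmetic suggestion is to record explicitly that $\phi\in H^\infty$ so that $M_\phi$ maps $H^2(\mathbb{T}^2)$ into itself with closed range $\phi H^2(\mathbb{T}^2)$; otherwise I see no gap.
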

Of interest in the area of Dynamical Frames (i.e. semigroup representation frames)  is determining semigroup representations (admitting frame vectors) for which their frame vectors are equivalent. Namely, when are each of the frame vectors for the frame representation of a semigroup the image of a fixed frame vector under an invertible operator in the commutant of the collection of operators given by the semigroup representation. Note that in the literature such frame representations are known as central frame representations \cite{BHKLL25}.

Theorem 3.9 in \cite{CHP20} shows that the frame vectors are equivalent for any frame representation of the semigroup $Z_+$ (which are precisely the frames of the form $\{T^n f\}_{n\geq 0}$ where $T \in B(H)$). It was recently shown in \cite{BHKLL25} that the frame vectors are also equivalent for any frame representation of the semigroup $Z_+^n$. This result implies the following.

\begin{proposition}
   Let $\{T_1^iT_2^j \varphi\}_{i,j \geq 0} \subset H$, where $T_1, T_2  \in B(H)$ commute be a frame.  Then $\{T_1^iT_2^j f\}_{i,j \geq 0}$ is also a frame for $H$ if and only if $f = V \varphi$ for some invertible $V \in B(H)$ that commutes with both $T_1,$ and $T_2$.
   
\end{proposition}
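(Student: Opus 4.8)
The plan is to recognize this proposition as a direct consequence of the equivalence of frame vectors for frame representations of $\mathbb{Z}_+^2$ established in \cite{BHKLL25}, supplemented by a routine verification in the converse direction. First I would recast the hypothesis in representation-theoretic language: the commuting pair $(T_1, T_2)$ determines a representation $\pi$ of the semigroup $\mathbb{Z}_+^2$ on $H$ via $\pi(i,j) = T_1^i T_2^j$, which is a genuine semigroup homomorphism precisely because $T_1$ and $T_2$ commute. Under this dictionary, the statement that $\{T_1^i T_2^j \varphi\}_{i,j \geq 0}$ is a frame says exactly that $\varphi$ is a frame vector for $\pi$, and the question of whether $\{T_1^i T_2^j f\}_{i,j \geq 0}$ is a frame becomes the question of whether $f$ is another frame vector for the \emph{same} representation $\pi$.

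For the implication $f = V\varphi \Rightarrow \{T_1^i T_2^j f\}_{i,j\ge0}$ is a frame, I would argue directly. Since $V$ commutes with $T_1$ and $T_2$, it commutes with every $T_1^i T_2^j$, so $T_1^i T_2^j f = T_1^i T_2^j V \varphi = V(T_1^i T_2^j \varphi)$. Thus $\{T_1^i T_2^j f\}_{i,j \geq 0}$ is the image of the frame $\{T_1^i T_2^j \varphi\}_{i,j \geq 0}$ under the topological isomorphism $V$, hence is itself a frame; the estimate is identical to the one in Lemma \ref{similarity}, since for each $g \in H$ one has $\sum_{i,j \geq 0} |\langle T_1^i T_2^j f, g\rangle|^2 = \sum_{i,j \geq 0} |\langle T_1^i T_2^j \varphi, V^* g\rangle|^2$, which is comparable to $\|g\|^2$ because $V^*$ is bounded and bounded below.

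For the converse I would invoke \cite{BHKLL25}: every frame representation of $\mathbb{Z}_+^n$ is central, so any two frame vectors for a fixed such representation differ by an invertible operator lying in the commutant of the representation. Applying this with $n = 2$ to the frame vectors $\varphi$ and $f$ for $\pi$ produces an invertible $V$ in the commutant of $\pi(\mathbb{Z}_+^2)$ with $f = V\varphi$. The remaining step is to translate ``commutant of $\pi(\mathbb{Z}_+^2)$'' back into the language of the proposition: an operator $V$ commutes with every $\pi(i,j) = T_1^i T_2^j$ if and only if it commutes with the two generators $T_1 = \pi(1,0)$ and $T_2 = \pi(0,1)$, the forward implication being specialization and the reverse following from $V T_1^i T_2^j = T_1^i V T_2^j = T_1^i T_2^j V$. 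This identifies $V$ as an invertible operator commuting with both $T_1$ and $T_2$, completing the proof.

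The main obstacle is not in the portion of the argument I control but in correctly marshaling the cited result: I must confirm that \cite{BHKLL25} is stated for the frame vectors of a single fixed representation (rather than, say, up to unitary equivalence of representations) and that its notion of commutant coincides with $\pi(\mathbb{Z}_+^2)'$. The genuine mathematical content lives entirely in that theorem, while the two directions above are essentially bookkeeping once the representation-theoretic translation and the generator-versus-commutant identification are in place.
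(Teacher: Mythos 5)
Your proposal is correct and follows essentially the same route as the paper: the authors likewise derive the proposition directly from the equivalence of frame vectors for frame representations of $\mathbb{Z}_+^n$ in \cite{BHKLL25} (their Theorem 3.4), with the paper's proof consisting of exactly that citation. Your additional spelling out of the easy direction and of the identification of the commutant of $\pi(\mathbb{Z}_+^2)$ with operators commuting with the two generators is correct bookkeeping that the paper leaves implicit.
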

\begin{proof}
    This follows immediately from Theorem 3.4 in \cite{BHKLL25}.
\end{proof}
\begin{proposition}
   Let $\{T_1^iT_2^j \varphi\}_{i,j \geq 0} \subset H$, where $T_1, T_2 \in B(H)$ commute be an overcomplete frame such that $R_1, R_2$ doubly commute on the kernel of its synthesis operator. Then for any frame $\{g_{i,j}\}_{i,j \geq 0} $  equivalent to $\{T_1^iT_2^j \varphi\}_{i,j \geq 0} \subset H$ (i.e. $ \{V(T_1^iT_2^j \varphi)\}_{i,j \geq 0} =  \{g_{i,j}\}_{i,j \geq 0}$ for some invertible $V \in B(H)$), we have $R_1, R_2$ doubly commute on the kernel of its synthesis operator as well. In particular, if $V$ commutes with both $T_1,$ and $T_2$, then replacing $\varphi$ with $g_{0 , 0}$ we obtain a frame of unilateral iterations satisfying the conditions of Corollary \ref{B-type}.
\end{proposition}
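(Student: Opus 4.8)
The plan is to observe that passing to an equivalent frame through an invertible operator leaves the kernel of the synthesis operator completely unchanged, so that the doubly-commuting hypothesis transfers with no additional work.

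First I would identify the synthesis operator of the equivalent frame. Let $U : \ell^2(\mathbb{N}_0 \times \mathbb{N}_0) \to H$ be the synthesis operator of $\{T_1^iT_2^j\varphi\}_{i,j \geq 0}$, so that $U\{c_{i,j}\} = \sum_{i,j \geq 0} c_{i,j} T_1^iT_2^j\varphi$, and let $U_g$ be the synthesis operator of $\{g_{i,j}\}_{i,j \geq 0}$. By the definition of equivalence we have $g_{i,j} = V(T_1^iT_2^j\varphi)$ for each pair $(i,j)$, so for any $\{c_{i,j}\} \in \ell^2(\mathbb{N}_0\times\mathbb{N}_0)$, using that $V$ is bounded (hence continuous and interchangeable with the convergent sum),
\[
U_g\{c_{i,j}\} = \sum_{i,j\geq 0} c_{i,j}\, V T_1^iT_2^j\varphi = V \sum_{i,j\geq 0} c_{i,j}\, T_1^iT_2^j\varphi = V\,U\{c_{i,j}\}.
\]
Thus $U_g = VU$. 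Next, because $V$ is invertible and in particular injective, I get $Ker(U_g) = Ker(VU) = Ker(U)$. The two synthesis operators share exactly the same kernel, so the hypothesis that $R_1, R_2$ doubly commute on $Ker(U)$ is verbatim the assertion that they doubly commute on $Ker(U_g)$, which yields the first conclusion immediately.

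For the ``in particular'' statement I would use commutativity of $V$ with $T_1$ and $T_2$ to rewrite the frame elements. Since $g_{0,0} = V\varphi$, commutativity gives $g_{i,j} = V T_1^iT_2^j\varphi = T_1^iT_2^j V\varphi = T_1^iT_2^j g_{0,0}$, so $\{g_{i,j}\}_{i,j\geq 0} = \{T_1^iT_2^j g_{0,0}\}_{i,j\geq 0}$ is again a frame of unilateral iterations of the same commuting pair $(T_1,T_2)$, now generated by $g_{0,0}$. Its synthesis-operator kernel is $Ker(U)$, on which $R_1, R_2$ doubly commute, and overcompleteness is preserved by the invertible map $V$ (an invertible image of a non-minimal frame is again non-minimal, i.e.\ not a Riesz basis). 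Hence $\{T_1^iT_2^j g_{0,0}\}_{i,j\geq 0}$ meets every hypothesis of Corollary \ref{B-type}.

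There is essentially no hard step here: the entire content is the factorization $U_g = VU$ together with the invertibility of $V$. The only points needing a little care are matching the indexing convention in the definition of equivalence, so that the $(i,j)$ entry of $\{g_{i,j}\}$ is exactly $V$ applied to the $(i,j)$ entry of the original frame, and recording that overcompleteness passes through an invertible operator; both are routine.
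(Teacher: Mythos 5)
Your proposal is correct and follows essentially the same route as the paper: the paper likewise observes that equivalence forces the kernels of the two synthesis operators to coincide (your explicit factorization $U_g = VU$ is just the justification the paper leaves implicit) and then uses $g_{i,j} = T_1^iT_2^j g_{0,0}$ via commutativity of $V$ with $T_1, T_2$ for the final claim.
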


\begin{proof}
    Let $\{g_{i,j}\}_{i,j \geq 0} $ be a frame for $H$ (note that any frame can be indexed in this way). Observe that given the fact that  $\{g_{i,j}\}_{i,j \geq 0} $ is equivalent to $ \{T_1^iT_2^j \varphi\}_{i,j \geq 0}$,  the kernel of their synthesis operators coincide. This implies that $\{g_{i,j}\}_{i,j \geq 0} $ is an overcomplete frame and  $R_1, R_2$ doubly commute on the kernel of its  synthesis operator. The last statement follows as $g_{0 , 0}$ = $V\varphi$ so that $g_{0 , 0}$ is a frame vector for the pair $(T_1, T_2)$ and $\{g_{i,j}\}_{i,j \geq 0} = \{T_1^iT_2^j g_{0 , 0}\}_{i,j \geq 0}$.
\end{proof}

\end{document}